\DeclareMathAlphabet\mathbb{U}{msb}{m}{n}
\newtheorem{theorem}{Theorem}[section]
\newtheorem{corollary}[theorem]{Corollary}
\newtheorem{lemma}[theorem]{Lemma}
\newtheorem{proposition}[theorem]{Proposition}
\theoremstyle{definition}
\newtheorem{definition}[theorem]{Definition}
\def\Z{\mathbb{Z}}
\def\L{\mathfrak{l}}
\def\g{\mathfrak{g}}
\def\h{\mathfrak{h}}
\def\a{\mathfrak{a}}
\def\b{\mathfrak{b}}
\def\f{\mathfrak{f}}
\def\Wedge{\,\Tilde{\wedge}\,}
\def\Ker{{\rm Ker}}
\def\k{K}
\def\lp{(\!(}
\def\rp{)\!)}
\def\lb{\llbracket}
\def\rb{\rrbracket}
\begin{document}

\title{Homological properties of parafree Lie algebras}

\begin{abstract}
 In this paper, an explicit construction of a countable parafree Lie algebra with nonzero second homology is given. It is also shown that the cohomological dimension of the pronilpotent completion of a free noncyclic finitely generated Lie algebra over $\mathbb Z$ is greater than two. Moreover, it is proven that there exists a countable parafree group with nontrivial $H_2.$
\end{abstract}

\author{Sergei O. Ivanov}
\address{
Laboratory of Modern Algebra and Applications,  St. Petersburg State University, 14th Line, 29b,
Saint Petersburg, 199178 Russia}\email{ivanov.s.o.1986@gmail.com}

\author{Roman Mikhailov}
\address{Laboratory of Modern Algebra and Applications, St. Petersburg State University, 14th Line, 29b,
Saint Petersburg, 199178 Russia and St. Petersburg Department of
Steklov Mathematical Institute} \email{rmikhailov@mail.ru}

\author{Anatolii Zaikovskii}
\address{
Laboratory of Modern Algebra and Applications,  St. Petersburg State University, 14th Line, 29b,
Saint Petersburg, 199178 Russia}\email{anat097@mail.ru}

\thanks{The authors are supported by the Russian Science Foundation grant N 16-11-10073.}

\maketitle

\section{Introduction}
\subsection*{Historical remarks.} In 1960-s G. Baumslag introduced the class of parafree groups \cite{Baumslag1}, \cite{Baumslag2}, \cite{Baumslag3}, \cite{Baumslag4}. Recall that a group $G$ is called {\it parafree} if $G$ is residually nilpotent and there exists a free group $F$ and a homomorphism $F\to G$, which induces isomorphisms of the lower central quotients $F/\gamma_i(F)\simeq G/\gamma_i(G),\ i\geq 1$. The main motivation to introduce and study parafree groups was the problem how to characterize the class of groups of cohomological dimension one. G. Baumslag called the parafree groups as "just about free" and had a hope that some of them may give examples of non-free groups of cohomological dimension one. At the end of 1960-s the results of J. Stallings and R. Swan appeared \cite{Stallings2}, \cite{Swan}. By Stallings-Swan theorem, the groups of cohomological dimension one are free, hence, the non-free parafree groups constructed in \cite{Baumslag1}, \cite{Baumslag2} have cohomological dimension at least two. Despite this fact, there are series of properties which parafree groups share with free groups. G. Baumslag during many years studied these properties and stated a number of natural problems about parafree groups. The main conjecture about homological properties of parafree groups is known as {\it Parafree Conjecture}: {\it for a finitely generated parafree group $G$, $H_2(G, \mathbb Z)=0$.} There is an additional strong form of the conjecture: {\it for a finitely generated parafree group $G$, $H_2(G, \mathbb Z)=0$ and the cohomological dimension of $G$ is $\leq 2$.} For the formulation of these conjectures we refer to \cite{Cochran}, see also \cite{CO} for the discussion of topological applications of parafree groups. T. Cochran wrote the following: "Some (including Baumslag) believe that all finitely-generated parafree groups have cohomological dimension at most 2 and have trivial $H_2$." The fact that in \cite{Cochran} the Parafree Conjecture is formulated for finitely generated groups only is due to the result of A.K. Bousfield \cite{B1}: {\it for a non-cyclic finitely generated free group $F$, and its pronilpotent completion $\hat F:={\sf lim}\ F/\gamma_i(F)$, $H_2(\hat F,\Z)$ is uncountable.} The pronilpotent completion $\hat F$ is parafree, hence it gives an example of a non-free parafree group with $H_2\neq 0.$ Observe that, the initial interest of G. Baumslag was not in just finitely generated parafree groups, but in parafree groups in general, in particular, he constructed locally free non-free parafree groups in \cite{Baumslag4}, and the problem of sharing the properties of parafree groups with free groups first was formulated in general, not only for finitely generated case.

In 2010-2014 the third author worked with G. Baumslag on constructing examples of countable (non-finitely generated, in general) parafree groups with nonzero $H_2$ and (or) cohomological dimension greater than two. That project was not finished and, for the moment, we are not able to present a (probably uncountable) parafree group of cohomological dimension greater than two. The problem whether the cohomological length of $\hat F$ is greater than two is still open. Next we will show that there exist countable parafree groups with $H_2\neq 0$. However, an explicit construction of such groups seems problematic. In this paper we make a step in this direction, by constructing explicit examples of countable parafree Lie algebras with $H_2\neq 0$ as well as an example of a parafree Lie algebra of cohomological length greater than two.

\subsection*{Countable parafree groups with nonzero $H_2$.}

If $F$ is a finitely generated free group of rank at least $2$, we say that $G$ is a parafree subgroup of $\hat F,$ if $F\subseteq G\subseteq \hat F $ and the embedding $F\subseteq G $ induces isomorphisms $F/\gamma_i(F)\cong G/\gamma_i(G),\ i\geq 1.$ It is easy to prove that there exists a countable parafree group  with non-trivial $H_2$ using the following proposition.

\medskip

\noindent {\bf Proposition 1.} {\it $\hat F$ is a filtered union of its countable parafree subgroups.}

\medskip

\noindent This proposition implies that $\hat F={\sf colim}\: G,$ where $G$ runs over the directed set of countable parafree subgroups of $\hat F$.  Since $H_2(-, \mathbb Z)$ commutes with filtered colimits, we obtain ${\sf colim}\: H_2(G, \mathbb Z) = H_2(\hat F, \mathbb Z) \ne 0.$ Therefore, there exists $G$ such that $H_2(G, \mathbb Z)\ne 0.$ Moreover, using that $H_2(\hat F, \mathbb Z)$ is uncountable, we obtain the following.

\medskip

\noindent {\bf Corollary.} {\it There exists an uncountable set of countable parafree subgroups $G\subset \hat F$ such that $H_2(G, \mathbb Z)\ne 0.$}

\subsection*{Parafree Lie algebras.} The concept of parafreeness can be naturally extended from groups to other algebraic categories, such as Lie algebras or augmented associative algebras. The following question rises naturally: {\it what kind of properties do free and parafree objects have in common}? Can one construct a finitely generated but not finitely presented parafree object? What can one say about homology and cohomological dimension of parafree objects?

Parafree Lie algebras are considered in \cite{BS}.
Suppose that $R$ is a commutative associative ring. For a Lie algebra $\g$ over $R$ we denote by $\gamma_i(\g)$ the lower central series of $\g.$ We also denote by $\gamma_\omega( \g)$ the intersection :
$$\gamma_\omega(\g)  = \bigcap\limits_{i \geq 1} \gamma_i(\g).$$ A Lie algebra $\g$ is called parafree if $\gamma_\omega(\g)=0$ and there exists a free Lie algebra $\mathfrak{f}$ (over R) with a homomorphism $\mathfrak{f} \to \g$ which induces isomorphisms $\mathfrak{f}/\gamma_i (\mathfrak{f})\simeq \g/\gamma_i(\g)$ for all $i=1,2,\dots$

For a Lie algebra $\g$, the {\it pronilpotent  completion} $\hat{\g}$ is the inverse limit $\varprojlim \g/\gamma_i(\g).$ It follows immediately from definition that, for a parafree Lie algebra $\g$ there is an isomorphism $\hat{\mathfrak{f}}\cong \hat{\g},$ where $\mathfrak{f}$ is a free Lie algebra.

The main results of this paper are Theorems A and B, formulated bellow. Denote by $\k$ a field of characteristic $2$ and by $\b$ the Lie algebra over $\k$ generated by elements $a, b, \{x_i\}, \{y_i\}$ for $i \geqslant 1$ with the following relations\footnote{For elements of Lie algebras we will use the left-normalized notation
$[a_1,\dots, a_i]:= [[a_1,\dots, a_{i-1}],a_i]$
and the following notation for Engel commutators
$[a,_0 b] := a$ and $[a,_{i+1} b] := [[a,_i b], b]$
for $i \geqslant 0.$}:
\begin{align*}
& x_1 = [a, b, b] + [x_2, b, b], \ \ \ y_1 = [a, b, a] + [y_2, b, b],\\
& \ldots\\
& x_i = [a, b, b] + [x_{i+1}, {}_{2^i} b], \ \ \ y_i = [a, b, a] + [y_{i+1}, {}_{2^i} b],\ {\text for\ all}\ i\geq 1.
\end{align*}

\medskip 

\noindent{\bf Theorem A.} {\it The Lie algebra $\mathfrak{a}= \b/\gamma_\omega(\b)$ is parafree and $H_2(\mathfrak{a},  \k)\neq 0.$}

\medskip

\noindent{\bf Theorem B.} {\it  Let $\f$ be a free Lie algebra over $\mathbb Z$ of rank two. The homology group of the pronilpotent completion  $H_2(\hat{\f}, \mathbb Z)$ contains a 2-divisible element. Hence, the cohomological dimension of $\hat \f$ is greater than two.}

\medskip

The proofs are based on the method used in the solution of Bousfield's problem \cite{IM}, \cite{IM2}. All results in \cite{IM}, \cite{IM2} are for groups. Here we prove their analogs for Lie algebras. In particular, we introduce the Lie-analog $\L_R$ of the lamplighter group. We show that certain elements of the second homology $H_2(\hat \f_R, R)$ are nonzero by projecting them onto the elements of $H_2(\hat{\L}_R,R)$. The main tool for showing that an element in $H_2(\hat{\L}_R,R)$ is nonzero is given by Corollary \ref{rational}: non-triviality of an element in homology follows from the non-rationality of certain formal power series.

We hope that the results of this paper will help to attack the homological problems for groups. In particular, the proof of ${\sf cd}(\hat{\mathfrak{f}} )>2$ in the case $R=\Z$ gives an approach for the proof of ${\sf cd}(\hat F_\Z)>2$, however, the group case is more complicated. Generally speaking, the theories of parafree Lie algebras and parafree groups are very similar, but the theory of
parafree augmented associative algebras is different from groups or Lie algebras. The pronilpotent completion of the free augmented associative algebra has cohomological dimension one but contains subobjects of cohomological dimension $\geq 2$, what is not possible in the category of groups. In view of this difference, it is not surprising that, a finitely presented parafree augmented associative algebra of cohomological dimension $>2$ can be constructed. The authors hope to give such kind of examples in the forthcoming papers.

\section{Proofs}
Throughout the paper we denote by $R$ a commutative associative ring. For a given set $X$ we denote by $\f = \f_R(X)$ the free Lie algebra over $R$ generated by a set $X.$ The free Lie algebra has a natural grading $\f = \oplus_{n \geqslant 1} \f_n.$  Note that an element $x$ from $\hat{\f}$ can be treated as an infinite series $x = \sum x_n$ with $x_n \in \f_n.$

For a Lie algebra $\mathfrak{g}$ over $R$ the $i$-th homology $H_i( \mathfrak{g} , -) = Tor^{U( \mathfrak{g} )}_i(R, -),$ where $U(\mathfrak{g})$ is the universal enveloping algebra and $R$ is viewed as the trivial module over $U(\mathfrak{g}).$ In the proofs we will often use description of second homology group in terms of non-abelian exterior square. That description was firstly found for groups by Miller \cite{Miller} and then it was found for Lie algebras by Ellis \cite{Ellis}.

\begin{definition}[\cite{Ellis}]
For a Lie algebra $\g$ over $R$ the exterior square $\g \Wedge \g$ is a Lie algebra, which underlying $R$-module is ${\rm Coker}(\varphi : \g \wedge_R \g \wedge_R \g \to \g \wedge_R \g),$ where $\varphi$ is given by 
    $$\varphi(m \wedge n \wedge k) = [m, n] \wedge k + [n, k] \wedge m + [k, m] \wedge n,$$
and a Lie bracket is given by 
    $$[m \wedge n, m' \wedge n'] = [m, n] \wedge [m', n'].$$
For $m \wedge n \in \g \wedge_R \g$ we denote by $m \Wedge n$ its representative in $\g \Wedge \g.$
\end{definition}

There is a canonical map $\g \Wedge \g \to \g$ given by $m \Wedge n \mapsto [m, n].$

\begin{proposition}[\cite{Ellis}]\label{Ellis description}
Suppose that $\g$ is a Lie algebra over $R.$ Then there is a natural isomorphism 
$$H_2(\g, R) \cong \Ker(\g \Wedge \g \to \g).$$
\end{proposition}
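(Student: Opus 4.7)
The strategy is to reduce both sides to the same quotient of commutator subalgebras via a chosen free presentation of $\g$.

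First, I would pick a free presentation $0 \to \mathfrak{r} \to \f \to \g \to 0$ of $\g$ by a free Lie algebra $\f$ over $R$. The Hochschild--Serre five-term exact sequence for this ideal extension reads
$$H_2(\f, R) \longrightarrow H_2(\g, R) \longrightarrow \mathfrak{r}/[\f, \mathfrak{r}] \longrightarrow \f/[\f, \f] \longrightarrow \g/[\g, \g] \longrightarrow 0.$$
Since $\f$ is free, $H_2(\f, R) = 0$, yielding the Lie-algebraic Hopf formula $H_2(\g, R) \cong (\mathfrak{r} \cap [\f, \f])/[\f, \mathfrak{r}]$.

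Second, I would construct a natural isomorphism $\Phi : \g \Wedge \g \to [\f, \f]/[\f, \mathfrak{r}]$ sending $\bar{x} \Wedge \bar{y}$ to $[x, y] + [\f, \mathfrak{r}]$, where $x, y \in \f$ are arbitrary lifts of $\bar{x}, \bar{y} \in \g$. The map is well defined: independence of lifts follows from $[x + r, y + s] - [x, y] \in [\f, \mathfrak{r}]$ for $r, s \in \mathfrak{r}$, and the Jacobi identity in $\f$ kills the cokernel relation $\varphi$ defining $\g \Wedge \g$. For the inverse, I would verify that $[\f, \f]/[\f, \mathfrak{r}]$ satisfies the same universal property as $\g \Wedge \g$: it is the initial $R$-module equipped with an alternating $R$-bilinear map from $\g \times \g$ satisfying the Jacobi-like identity of $\varphi$.

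Third, under $\Phi$ the canonical map $\g \Wedge \g \to \g$ corresponds to the map $[\f, \f]/[\f, \mathfrak{r}] \to \g$ induced by $\f \twoheadrightarrow \g$, whose kernel is exactly $(\mathfrak{r} \cap [\f, \f])/[\f, \mathfrak{r}]$; by the first step this equals $H_2(\g, R)$, proving the proposition. Naturality follows from the functoriality of the Hopf formula and of the non-abelian exterior square. The main obstacle is the universal-property verification in the second step: one must use freeness of $\f$ essentially to show that the Jacobi-like relators are the only obstruction to descending an alternating bilinear map from $\f \wedge_R \f$ to $[\f, \f]$ --- that is, $H_2(\f, R) = 0$ for free $\f$ --- which, when $\f$ is $R$-projective, follows immediately from the Chevalley--Eilenberg complex, and in general requires a more careful resolution argument.
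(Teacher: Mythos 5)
The paper does not prove this proposition; it simply quotes it from Ellis's article. Your Hopf-formula argument is the standard proof of that result and is structurally sound: the five-term sequence of $\mathfrak{r}\rightarrowtail\f\twoheadrightarrow\g$ plus $H_2(\f,R)=0$ gives $H_2(\g,R)\cong(\mathfrak{r}\cap[\f,\f])/[\f,\mathfrak{r}]$, your map $\bar x\Wedge\bar y\mapsto [x,y]+[\f,\mathfrak{r}]$ is a well-defined surjection commuting with the bracket maps to $\g$, and everything reduces to its injectivity, which you correctly locate as the crux. Two points of precision there. First, the sub-claim you actually need is not literally ``$H_2(\f,R)=0$'' but that $\Ker\bigl(\f\wedge_R\f\to[\f,\f]\bigr)$ equals the image of the Jacobi map $\varphi_{\f}$, i.e.\ that $\f\Wedge\f\to[\f,\f]$ is an isomorphism. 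This is the second homology of the Chevalley--Eilenberg complex of $\f$; it is identified with ${\rm Tor}_2^{U(\f)}(R,R)$ because $\f$ is $R$-free as a module, and the latter vanishes not ``immediately from the CE complex'' but from the length-one free resolution $0\to U(\f)\otimes_R R^{(X)}\to U(\f)\to R\to 0$ of $R$ over the tensor algebra $U(\f)$. Since a free Lie algebra on a set is always $R$-free, the ``more careful resolution argument'' you anticipate for the non-projective case is never needed for $\f$ itself. Second, for $R$-projective $\g$ (in particular over the field $\k$, which covers all of Theorem A) the proposition needs no free presentation at all: up to sign, $\varphi$ and $m\wedge n\mapsto[m,n]$ are exactly the Chevalley--Eilenberg differentials $d_3$ and $d_2$, so $\Ker(\g\Wedge\g\to\g)=\Ker d_2/{\rm Im}\, d_3=H_2(\g,R)$ by definition. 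Your longer route through the Hopf formula is what makes the statement valid for algebras such as $\hat{\f}_{\mathbb Z}$, which need not be $\mathbb Z$-projective, and that generality is genuinely used in Theorem B.
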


\begin{corollary}
If $\g$ is an abelian Lie algebra, then $H_2(\g, R) \cong \g \Wedge \g \cong \g \wedge_R \g.$
\end{corollary}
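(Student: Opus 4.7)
The plan is to read the corollary directly off the Ellis description (Proposition~\ref{Ellis description}) together with the explicit construction of $\g \Wedge \g$ as a cokernel. Since the statement is a straightforward bookkeeping exercise once the abelianness assumption is exploited, I do not expect any genuine obstacle; the only thing to check is that both maps in the definition collapse.

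First, I would apply Proposition~\ref{Ellis description} to identify $H_2(\g, R)$ with $\Ker(\g \Wedge \g \to \g)$, where the map in question sends $m \Wedge n$ to $[m,n]$. Since $\g$ is abelian, every bracket $[m,n]$ vanishes, so this canonical map is identically zero. Therefore its kernel is the whole target, yielding $H_2(\g, R) \cong \g \Wedge \g$.

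Next I would unpack the definition $\g \Wedge \g = \mathrm{Coker}(\varphi)$ with
\[
\varphi(m \wedge n \wedge k) = [m,n]\wedge k + [n,k]\wedge m + [k,m]\wedge n.
\]
Again, because $\g$ is abelian, each of the three summands is zero, so $\varphi$ is the zero map. Consequently the cokernel is simply $\g \wedge_R \g$, giving the second isomorphism $\g \Wedge \g \cong \g \wedge_R \g$. Chaining the two identifications produces the claimed $H_2(\g, R) \cong \g \wedge_R \g$, and one checks naturality of both steps to see that the composite isomorphism is canonical.
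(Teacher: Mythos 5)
Your argument is correct and is exactly the intended one: the paper leaves this corollary unproved as an immediate consequence of Proposition~\ref{Ellis description}, and your observation that both the canonical map $m \Wedge n \mapsto [m,n]$ and the relation map $\varphi$ vanish when $\g$ is abelian is precisely the two-step collapse that justifies it.
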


\subsection{Free Lie algebra pronilpotent completion homology}

It was shown in \cite{B1} and later in \cite{IM} that $H_2(\hat{F}_{\mathbb Z}, {\mathbb Z})$ is uncountable, where $\hat{F}_{\mathbb Z}$ is the pronilpotent completion of the free group on two generators. Here we provide an analogue of this result for the free Lie algebra $\f_R$ of rank two using methods from \cite{IM} and \cite{IM2}.

\begin{definition}
By $\L_R$ we denote the semidirect sum of abelian Lie algebras $R[x] \rtimes Q,$ where $Q = Rt$ is a free $R$-module of rank $1$ generated by an element $t.$ The generator $t \in Q$ acts on a polynomial $p \in R[x]$ as follows:
    $$[p, t] = p x.$$
\end{definition}

For a commutative ring $R$ we denote the ring of formal power series over $R$ by $R\lb x\rb.$ We consider it as an abelian Lie algebra.
\begin{lemma}
The pronilpotent completion $\hat{\L}_R$ is a semidirect sum $R\lb x\rb \rtimes Q$ with the action 
    $$[f, t] = f x$$
for $f \in R\lb x\rb$ and the generator $t \in Q.$
\end{lemma}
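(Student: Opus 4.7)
The plan is to determine the lower central series of $\L_R$ explicitly, describe each nilpotent quotient, and then pass to the inverse limit. Writing a general element of $\L_R$ as $p + \lambda t$ with $p \in R[x]$ and $\lambda \in R$, the only nontrivial bracket between generators is $[p,t] = px$; in particular $[\L_R, \L_R] = xR[x]$, viewed as a submodule of $R[x] \subseteq \L_R$.

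Next, I would show by induction on $n \geq 2$ that $\gamma_n(\L_R) = x^{n-1} R[x]$. The base case is the observation above. For the inductive step, brackets of two elements from the abelian submodule $R[x]$, or from $Rt$, vanish, so the only surviving contribution is $[x^{n-1}R[x], Rt] = x^n R[x]$. The reverse inclusion is clear since $[x^{n-1} p, t] = x^n p$ for any $p \in R[x]$.

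Therefore, as a Lie algebra, $\L_R / \gamma_n(\L_R) \cong (R[x]/x^{n-1} R[x]) \rtimes Q$ for $n \geq 2$, with the inherited action $[\bar p, t] = \overline{px}$. Passing to the inverse limit, the $Q$-summand is constant because $Q \cap \gamma_n(\L_R) = 0$ for $n \geq 2$, while $\varprojlim R[x]/x^{n-1}R[x] = R\lb x \rb$ is the standard presentation of the formal power series ring. Since the transition maps are Lie algebra homomorphisms, the inverse limit of the semidirect sums is the semidirect sum of the inverse limits, and the limiting action is still multiplication by $x$, yielding $\hat{\L}_R \cong R\lb x \rb \rtimes Q$ with $[f, t] = fx$.

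The computation is essentially routine; the only point that needs genuine care is ruling out spurious terms in iterated brackets. This is forced by the abelianness of both $R[x]$ and $Rt$: every nonzero iterated bracket on $n$ generators must contain enough occurrences of $t$ to accumulate $n-1$ factors of $x$, and the result always lands in $R[x]$, which is exactly what the induction records.
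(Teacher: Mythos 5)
Your proposal is correct and follows essentially the same route as the paper: compute $\gamma_n(\L_R)$ as a power of $x$ times $R[x]$, identify the nilpotent quotients as $R[x]/(x^{\bullet})\rtimes Q$, and pass to the inverse limit $\varprojlim R[x]/(x^{\bullet}) = R\lb x\rb$. Your version is more detailed (and your indexing $\gamma_n(\L_R)=x^{n-1}R[x]$ for $n\geq 2$ is the careful one under the convention $\gamma_1(\g)=\g$, a harmless off-by-one relative to the paper's statement that does not affect the limit).
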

\begin{proof}
Since $\gamma_n(\L_R) = x^n R[x]$ and $\L_R / \gamma_n(\L_R) = R[x]/\left(x^n\right) \rtimes Q$ with similar action, the assertion follows from the fact that $\varprojlim R[x]/(x^n) = R\lb x\rb.$
\end{proof}

\begin{lemma}\label{Lamplighter homology}
There is an isomorphism
    $$(R\lb x\rb\wedge_R R\lb x\rb)_Q \cong H_2(\hat{\L}_R, R)$$
induced by the inclusion $R\lb x\rb \to \hat{\L}_R,$ where the action $Q$ on $R\lb x\rb\wedge_R R\lb x\rb$ is given by
    $$(f \wedge g) \cdot t = fx \wedge g + f \wedge gx.$$
\end{lemma}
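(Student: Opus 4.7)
My plan is to apply Ellis's description from Proposition \ref{Ellis description} directly and compute $H_2(\hat\L_R, R) = \Ker(\hat\L_R \Wedge \hat\L_R \to \hat\L_R)$ by dissecting the exterior square by hand. As an $R$-module we have $\hat\L_R = R\lb x\rb \oplus Rt$. Since $Q = Rt$ has rank one, $Q\wedge_R Q = 0$, so
\[
\hat\L_R \wedge_R \hat\L_R \;=\; (R\lb x\rb \wedge_R R\lb x\rb) \,\oplus\, (R\lb x\rb \otimes_R Q),
\]
and the second summand is naturally identified with $R\lb x\rb$ via $f\otimes t \mapsto f$.

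The next step is to pin down exactly which relations the passage $\wedge_R \to \Wedge$ imposes. By trilinearity it suffices to test the cyclic relation $\varphi(m\wedge n\wedge k)=0$ on triples whose entries are either in $R\lb x\rb$ or equal $t$. Triples entirely inside $R\lb x\rb$ are killed because $R\lb x\rb$ is abelian, and triples containing two or three $t$'s yield tautologically trivial relations (e.g.\ for $(t,t,f)$, the cyclic sum is $-fx\wedge t + fx\wedge t = 0$). The only informative case is $(f,g,t)$ with $f,g\in R\lb x\rb$, and using $[f,t]=fx$ this gives
\[
[f,g]\wedge t + [g,t]\wedge f + [t,f]\wedge g \;=\; gx\wedge f - fx\wedge g \;=\; 0,
\]
which, after applying antisymmetry, is exactly $fx\wedge g + f\wedge gx = 0$, the defining relation of the coinvariants under the prescribed action $(f\wedge g)\cdot t = fx\wedge g + f\wedge gx$. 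Crucially, this relation touches only the first summand of the decomposition above, so
\[
\hat\L_R \Wedge \hat\L_R \;\cong\; \bigl(R\lb x\rb \wedge_R R\lb x\rb\bigr)_Q \,\oplus\, R\lb x\rb.
\]

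Finally, the bracket map $\hat\L_R \Wedge \hat\L_R \to \hat\L_R$ vanishes on the first summand (since $R\lb x\rb$ is abelian) and sends $f$ in the second summand to $[f,t]=fx$; since multiplication by $x$ is injective on $R\lb x\rb$, the kernel of the bracket map is exactly the first summand. Compatibility with the inclusion $R\lb x\rb\hookrightarrow \hat\L_R$ is automatic: by Ellis's functoriality the induced map $H_2(R\lb x\rb,R) = R\lb x\rb\wedge_R R\lb x\rb \to H_2(\hat\L_R,R)$ sends $f\wedge g$ to its image in the first summand, which is precisely the quotient onto $(R\lb x\rb\wedge_R R\lb x\rb)_Q$. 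The main care point will be the middle step: rigorously verifying that the cyclic relations impose only the single identification above and leave the $R\lb x\rb \otimes_R Q$ summand untouched, so that the two-block decomposition of $\hat\L_R\Wedge\hat\L_R$ is genuine. Once that is in place, the identification of $H_2$ with the coinvariants is immediate.
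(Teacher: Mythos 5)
Your proof is correct, but it takes a genuinely different route from the paper. The paper runs the Lyndon--Hochschild--Serre spectral sequence of the extension $R\lb x\rb \rightarrowtail \hat{\L}_R \twoheadrightarrow Q$: since $Q$ is free of rank one, $H_n(Q,-)=0$ for $n\geqslant 2$, the term $E^2_{1,1}=H_1(Q,R\lb x\rb)\cong R\lb x\rb^Q$ vanishes because multiplication by $x$ has no kernel, and the edge map $E^2_{0,2}=H_2(R\lb x\rb,R)_Q\to H_2(\hat{\L}_R,R)$ is then an isomorphism. You instead compute $\hat{\L}_R\Wedge\hat{\L}_R$ directly from the presentation as ${\rm Coker}(\varphi)$, and your bookkeeping is sound: $\Lambda^2(R\lb x\rb\oplus Rt)\cong(R\lb x\rb\wedge_R R\lb x\rb)\oplus(R\lb x\rb\otimes_R Rt)$, the only nontrivial relations come from triples $(f,g,t)$ and give exactly $fx\wedge g+f\wedge gx$ (up to sign), these land entirely in the first summand, and the bracket map is zero on the first block and injective (multiplication by $x$) on the second, so the kernel is precisely the coinvariants. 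Note that the same injectivity of multiplication by $x$ that you use to kill the second block is what the paper uses to kill $E^2_{1,1}$, so the two arguments are secretly using the same input. What your approach buys is elementariness and an explicit description of the whole exterior square $\hat{\L}_R\Wedge\hat{\L}_R$, not just of $H_2$, plus a transparent verification that the isomorphism is induced by the inclusion; what it costs is that it leans entirely on the cokernel presentation of $\g\Wedge\g$ and on Proposition~\ref{Ellis description}, whereas the spectral-sequence argument generalizes immediately to extensions with more complicated kernels where a hands-on computation of the exterior square would be unmanageable. The one step you flagged as the ``main care point'' --- that the cyclic relations leave the $R\lb x\rb\otimes_R Q$ summand untouched --- does hold, since $[f,g]\wedge t=0$ for $f,g\in R\lb x\rb$ and the wedges involving two copies of $t$ vanish in $\Lambda^3$, so no gap remains.
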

\begin{proof}
Consider the short exact sequence $R\lb x\rb \rightarrowtail \hat{\L}_R \twoheadrightarrow Q$ and the associated Lyndon–Hochschild–Serre spectral sequence 
    $$E_{i, j}^2 = H_i(Q, H_j(R\lb x\rb, R)) \implies H_{i + j}(\hat{\L}_R, R),$$
where $R$ is the trivial module over $\hat{\L}_R.$ Since $Q$ is a free Lie algebra of rank one, we have $H_n(Q, -) = 0$ for $n \geqslant 2.$ It follows that $E^2_{i, j} = 0$ for $i \geqslant 2,$ and hence, there is a short exact sequence
    $$0 \longrightarrow E^2_{0,2} \longrightarrow H_2(\hat{\L}_R, R) \longrightarrow E^2_{1,1} \longrightarrow 0.$$
The action of $Q$ on $R\lb x\rb$ has no invariants, so $E^2_{1,1} = H_1(Q, R\lb x\rb) \cong R\lb x\rb^Q = 0,$ and the inclusion
    $$H_2(R\lb x\rb, R)_Q = E^2_{0, 2} \longrightarrow H_2(\hat{\L}_R,R)$$
is an isomorphism. Since $R\lb x\rb$ is abelian, $H_2(R\lb x\rb, R) \cong R\lb x\rb\wedge_R R\lb x\rb$ and the assertion follows. 
\end{proof}

\begin{lemma}\label{Coinvariants description}
Suppose that $\sigma: R\lb x\rb \to R\lb x\rb$ is an involution given by
    $$\sigma(\sum\limits_{i \geqslant 0} q_i x^i) = \sum\limits_{i \geqslant 0} (-1)^i q_i x^i.$$
Then there is an isomorphism
    $$\eta: (R\lb x\rb\wedge_R R\lb x\rb)_Q \xrightarrow{\sim} R\lb x\rb \wedge_{R[x]} R\lb x\rb$$
given by $\eta(\overline{f \wedge g}) = f \wedge \sigma(g),$ $\eta^{-1}(f \wedge g) = \overline{f \wedge \sigma(g)}.$
\end{lemma}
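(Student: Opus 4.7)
The plan is to exploit the anti-equivariance of $\sigma$ with respect to multiplication by $x$, namely $\sigma(fx) = -\sigma(f)x$. Substituting $\sigma(g)$ for $g$ in the $Q$-generator $fx \wedge g + f \wedge gx$ produces $fx \wedge \sigma(g) - f \wedge x\sigma(g)$, which is precisely the $R[x]$-balancedness relation in $R\lb x\rb \wedge_{R[x]} R\lb x\rb$. This sign flip is the reason $\sigma$ appears in the formula.

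First I would construct $\eta$ by starting from the $R$-linear map on tensor products $f \otimes g \mapsto f \wedge \sigma(g)$, and check the two quotienting steps: (i) the $Q$-relation is sent to the $R[x]$-balancedness relation in the target, hence vanishes, as noted above; (ii) the exterior-square relation $h \wedge h = 0$ is respected, which reduces to showing $f \wedge \sigma(f) = 0$ in $R\lb x\rb \wedge_{R[x]} R\lb x\rb$. For (ii), decomposing $f = f_e + f_o$ into even and odd parts gives $f \wedge \sigma(f) = -2(f_e \wedge f_o)$; writing $f_o = xf_o'$ with $f_o' \in R\lb x^2\rb$ and invoking $R[x]$-bilinearity reduces this to a form that vanishes. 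The inverse $\eta^{-1}$ is produced symmetrically: its defining formula sends $xf \wedge g - f \wedge xg$ to $xf \wedge \sigma(g) + f \wedge x\sigma(g)$, which is exactly the $Q$-action applied to $f \wedge \sigma(g)$ and hence vanishes in the coinvariants.

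That $\eta$ and $\eta^{-1}$ are mutually inverse is then immediate from $\sigma^2 = \mathrm{id}$: each composition acts as the identity on representatives. The main technical obstacle is the antisymmetry check (ii) — the vanishing of $f \wedge \sigma(f)$ in the target — which is where the specific interplay between the sign-flipping involution and the $R[x]$-bilinearity of $R\lb x\rb \wedge_{R[x]} R\lb x\rb$ becomes essential, and is really the content that makes the two quotients of $R\lb x\rb \wedge_R R\lb x\rb$ agree.
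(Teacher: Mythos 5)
Your reading of the situation is right in outline: the only nontrivial content is that the twist by $\sigma$ interchanges the $Q$-coinvariance relation with the $R[x]$-balancedness relation, and you verify that exchange correctly in both directions. The gap is in your step (ii), which you yourself single out as ``the main technical obstacle'' and then do not actually carry out. You correctly compute $f \wedge \sigma(f) = -2(f_e \wedge f_o)$, but the claim that writing $f_o = x f_o'$ and ``invoking $R[x]$-bilinearity'' reduces this to something that vanishes is unsubstantiated: all you obtain is $-2x(f_e \wedge f_o')$ with $f_e, f_o' \in R\lb x^2\rb$, and no relation of $R\lb x\rb \wedge_{R[x]} R\lb x\rb$ forces that to be zero. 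Worse, the statement you need is false as you have set it up, unless $2=0$ in $R$. Take $R=\Z$, $f_e = 1$ and $f_o' = \sum_{n\geqslant 1} x^{2^n}$, so $f = 1 + x f_o'$. Under the map to $R\lp x\rp \wedge_{R(x)} R\lp x\rp$ used in the proof of \Cref{rational}, the element $1 \wedge f_o'$ has nonzero image, because $f_o'$ is not rational (\Cref{lemma_not_rational}) and hence $1, f_o'$ are linearly independent over $R(x)$; multiplying by the nonzero scalar $-2x$ of the field $R(x)$ keeps it nonzero. So $f \wedge \sigma(f) \neq 0$ in $\Z\lb x\rb \wedge_{\Z[x]} \Z\lb x\rb$, and the assignment $f \otimes g \mapsto f \wedge \sigma(g)$ does not kill the alternating relation of the source. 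Your argument for the well-definedness of $\eta$ therefore only goes through when $2 = 0$ in $R$ (the setting of Theorem A), and breaks down over $\Z$ (the setting of Theorem B).

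A second, smaller omission: for $\eta^{-1}$ you check only the balancedness relation $xf \otimes g - f \otimes xg$. Since the source of $\eta^{-1}$ is an exterior square over $R[x]$, the whole $R[x]$-submodule generated by the squares --- that is, all elements $(ph) \wedge h$ with $p \in R[x]$ --- must be shown to die in $(R\lb x\rb \wedge_R R\lb x\rb)_Q$, and this runs into the same $2$-torsion difficulty. To be fair, the paper's own proof dismisses both verifications as ``easy to check'' and gives no more detail than you do; the computation above shows that the alternating check is genuinely delicate once $2$ is neither zero nor invertible, and any complete argument must either restrict to characteristic $2$, or specify a convention for $\wedge$ under which the class of $2(f_e \wedge f_o)$ is killed on both sides, or replace the claimed isomorphism by the one-directional comparison that the applications actually require. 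As written, your proof does not close the gap it correctly identifies as the heart of the lemma.
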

\begin{proof}
Using a property of sigma $\sigma(gx) = -x\sigma(g)$ it is easy to check, that the maps above are well-defined. Also those maps are obviously inverse to each other.
\end{proof}

\begin{lemma}\label{rational}
Suppose that $R$ is an integral domain and  $p \in R\lb x\rb.$ If $p \wedge 1 = 0 \in R\lb x\rb \wedge_{R[x]} R\lb x\rb$, then $p$ is a rational function over $R.$
\end{lemma}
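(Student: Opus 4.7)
The plan is to reduce the statement to the case where $R$ is a field and then invoke the fact that in a vector space, $v\wedge w=0$ forces $v$ and $w$ to be linearly dependent. For the reduction, let $K_0:=\mathrm{Frac}(R)$. The inclusions $R[x]\hookrightarrow K_0[x]$ and $R\lb x\rb \hookrightarrow K_0\lb x\rb$ are compatible, and functoriality produces a natural map
$$R\lb x\rb\wedge_{R[x]} R\lb x\rb \longrightarrow K_0\lb x\rb\wedge_{K_0[x]} K_0\lb x\rb$$
sending $p\wedge 1$ to $p\wedge 1$. Thus, once the lemma is established over the field $K_0$, we obtain $p=f/g$ with $f,g\in K_0[x]$ and $g\neq 0$; after multiplying both $f$ and $g$ by a single element of $R\setminus\{0\}$ that clears all denominators among their coefficients, we arrive at $p=a/b$ with $a,b\in R[x]$.

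Assume henceforth that $R$ is a field, so that $A:=R[x]$ is a principal ideal domain. The module $M:=R\lb x\rb$ is torsion-free over $A$, being a domain extension of $A$, and hence flat. The main technical step is to verify that $\Lambda^2_A M$ is torsion-free over $A$. For this I plan to invoke Lazard's theorem to write $M=\varinjlim F_i$ as a filtered colimit of finitely generated free $A$-modules; since $\Lambda^2$ commutes with filtered colimits and each $\Lambda^2_A F_i$ is free, $\Lambda^2_A M$ itself is a filtered colimit of free modules, hence torsion-free. Letting $K:=\mathrm{Frac}(A)=R(x)$, torsion-freeness makes the localization map
$$\Lambda^2_A M \;\hookrightarrow\; K\otimes_A \Lambda^2_A M \;=\; \Lambda^2_K (K\otimes_A M)$$
injective, so the class of $p\wedge 1$ vanishes in $\Lambda^2_K(K\otimes_A M)$ as well.

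Since $K\otimes_A M$ is a $K$-vector space, $p\wedge 1=0$ forces $p$ and $1$ to be $K$-linearly dependent; as $1\neq 0$, we get $p=c$ in $K\otimes_A M$ for some $c\in K$. Writing $c=a/b$ with $a,b\in A$ and $b\neq 0$ gives $bp=a$ in $K\otimes_A M$, and torsion-freeness of $M$ over $A$ ensures $M\hookrightarrow K\otimes_A M$, so the equality $bp=a$ already holds in $R\lb x\rb$. This exhibits $p$ as a rational function over $R$. I expect the principal obstacle to be the torsion-freeness of $\Lambda^2_A M$; the Lazard route handles it uniformly, but a more elementary argument exhibiting $M$ concretely as a directed union of finitely generated free submodules would avoid invoking that theorem.
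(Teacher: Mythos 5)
Your proof is correct, and its core is the same as the paper's: pass to an exterior square over a field of fractions, where $v\wedge w=0$ forces linear dependence, and then descend. The paper does this in one step, mapping $R\lb x\rb \wedge_{R[x]} R\lb x\rb$ to $R\lp x\rp \wedge_{R(x)} R\lp x\rp$ where $R(x)=\mathrm{Frac}(R[x])$ and $R\lp x\rp = R\lb x\rb\otimes_{R[x]}R(x)$, so your preliminary reduction from $R$ to $K_0=\mathrm{Frac}(R)$ is harmless but not needed. The more substantive difference is that the step you single out as the principal obstacle --- torsion-freeness of $\Lambda^2_A M$ via Lazard's theorem --- is superfluous. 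You are pushing the hypothesis $p\wedge 1=0$ \emph{forward} along the localization map $\Lambda^2_A M \to \Lambda^2_K(K\otimes_A M)$, so the mere existence (naturality) of that map already gives $p\wedge 1=0$ on the right; injectivity of the map would only be needed to pull information back, which you never do. The only injectivity genuinely used is that of $M\hookrightarrow K\otimes_A M$ (to see $1\neq 0$ and to descend $bp=a$ to $R\lb x\rb$), and that follows from torsion-freeness of $M$ itself, which you establish directly. Dropping the Lazard detour collapses your argument to essentially the paper's three-line proof; what the detour buys is only the stronger (unused) statement that no information is lost in passing to the localized exterior square.
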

\begin{proof}
Let us denote by $R(x)$ the field of rational functions over $R$ and denote by $R\lp x\rp = R\lb x\rb \otimes_{R[x]} R(x)$ the field of Laurent power series. Then the following diagram of $R[x]$ modules is commutative 
$$
\begin{tikzcd} 
    R\lb x\rb \arrow[rr] \arrow[d, rightarrowtail] && R\lb x\rb \wedge_{R[x]} R\lb x\rb \arrow[d] \\
    R\lp x\rp \arrow[rr] && R\lp x\rp \wedge_{R(x)} R\lp x\rp,
\end{tikzcd}
$$
where horizontal maps send $p$ to $p \wedge 1.$
Since $R\lb x\rb$ is an integral domain we can regard it as a subring of $R\lp x\rp.$ Hence 
    $$\Ker\left(R\lb x\rb \to R\lb x\rb \wedge_{R[x]} R\lb x\rb\right) \subseteq \Ker\left(R\lp x\rp \to R\lp x\rp \wedge_{R(x)} R\lp x\rp\right)$$
and it is sufficient to show that if $p$ is in the right set, then it is in $R(x).$ Indeed, if $p \wedge 1 = 0$ for $p \in R\lp x\rp,$ then $p$ and $1$ are linearly
dependent over $R(x),$ so $p \in R(x).$
\end{proof}

\begin{lemma}\label{lemma_not_rational}
Let $R$ be an integral domain and $p=\sum_{i=0}^\infty a_ix^i \in R\lb  x\rb$ is a power series, which is not a polynomial, and such that for any $n\geq 1$ there exists $k\geq 0$ such that $a_k\ne 0$ and $a_{k+1}=\dots =a_{k+n}=0.$ Then $p$ is not rational. 
\end{lemma}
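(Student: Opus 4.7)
I would argue by contradiction: assume $p$ is rational and deduce that $p$ must in fact be a polynomial, contradicting the hypothesis.

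Since $R$ is an integral domain, $R(x)$ coincides with $K(x)$ for $K = \mathrm{Frac}(R)$, so I may work over the field $K$. Write $p = f/g$ with $f,g \in K[x]$ coprime. Coprimality rules out $g(0) = 0$: otherwise $x$ would divide $g$ in $K[x]$, and the identity $gp = f$ (which holds in $K\lb x\rb$ because the power-series expansion of a rational function is unique) would then force $x$ to divide $f$ as well. After rescaling so that $g(0) = 1$, write $g = 1 + b_1 x + \cdots + b_d x^d$ with $d = \deg g$ and set $N = \deg f$. Comparing coefficients of $x^i$ in $gp = f$ yields the forward linear recurrence
$$ a_i = -b_1 a_{i-1} - b_2 a_{i-2} - \cdots - b_d a_{i-d} \qquad (i > N). $$

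Next I would feed the sparsity hypothesis with $n := N + d$: this produces some $k \geq 0$ with $a_k \neq 0$ and $a_{k+1} = a_{k+2} = \cdots = a_{k+N+d} = 0$. The last $d$ of these vanishing coefficients are $a_{k+N+1}, a_{k+N+2}, \ldots, a_{k+N+d}$, all at indices strictly greater than $N$. Applying the recurrence at $i = k + N + d + 1 > N$ expresses $a_i$ as a $K$-linear combination of those $d$ zero coefficients, so $a_{k+N+d+1} = 0$. A routine induction extends this to $a_i = 0$ for every $i \geq k + N + 1$. Hence $p$ is a polynomial of degree at most $k + N$, contradicting the assumption that $p$ is not a polynomial.

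The only mildly delicate step in the plan is the reduction to a representation $p = f/g$ with $g(0) \neq 0$, which requires using coprimality in $K[x]$ rather than in $R[x]$; that is precisely where the assumption that $R$ is a domain is used. Everything after that is a standard zero-propagation argument for linear recurrences, and the choice $n = N + d$ (rather than the more obvious $n = d$) is what guarantees that the block of zeros produced by the hypothesis lies in the range where the recurrence is valid.
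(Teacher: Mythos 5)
Your proof is correct, but it takes a genuinely different route from the paper's. You reduce to a coprime representation $p = f/g$ over the fraction field $K$ of $R$, use coprimality to force $g(0)\neq 0$, and then run the standard zero-propagation argument for the linear recurrence $a_i = -b_1a_{i-1}-\cdots-b_da_{i-d}$ valid for $i$ beyond $\deg f$; the choice $n = N+d$ correctly places a block of $d$ consecutive vanishing coefficients inside the range where the recurrence applies, after which all later coefficients vanish. The paper's proof is shorter and avoids both the recurrence and the normalization: for an arbitrary polynomial $q=\sum_{i=0}^n b_ix^i$ of degree $n$, it notes that the hypothesis (combined with the non-polynomiality of $p$, which upgrades ``there exists $k$'' to ``there exist infinitely many $k$'') makes the $(k+n)$-th coefficient of $pq$ equal to $a_kb_n\neq 0$ for infinitely many $k$, so $pq$ is never a polynomial and hence $p\notin R(x)$. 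The paper's computation works directly over the domain $R$ with no passage to $K$ and no coprimality argument, whereas your version is the more systematic recurrence-theoretic one and makes explicit the mechanism (a long enough gap past $\deg f$ kills the tail). One cosmetic point in your write-up: the recurrence as you state it is only literally valid for $i>\max(N,d-1)$ (otherwise it references negative indices), but since you only ever apply it at indices $\geq k+N+d+1 > d$, this does not affect the argument; likewise the degenerate case $d=0$ makes $p$ a polynomial outright and can be dismissed at the start.
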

\begin{proof} Consider a polynomial $q=\sum_{i=0}^n b_i x^i.$ Since $p$ is not a polynomial, there are infinitely many numbers $k$ such that $a_k\ne 0$ and $a_{k+1}=\dots =a_{k+n}=0.$ Then for any such $k$ the $k+n$-th coefficient of $p\cdot q$ is $a_{k}b_{n}.$ Therefore $p\cdot q$ is not a polynomial.  
\end{proof}

For a sequence $\alpha=(\alpha_n)\in \{0,1\}^{\mathbb N}$ we consider the following power series 
$$p_\alpha =\sum_{n=1}^\infty \alpha_nx^{2^n}$$

\begin{lemma}\label{lemma_image_uncount}
Let $R$ be an integral domain. Then the image of the map 
$$R\lb x\rb \to R\lb x\rb \wedge_{R[x]} R\lb x\rb$$ given by $p \mapsto p \wedge 1$ is uncountable. Moreover the set 
$$\{p_\alpha \wedge 1 \in R\lb x\rb \wedge_{R[x]} R\lb x\rb \mid \alpha \in \{0,1\}^{\mathbb N} \}$$
is uncountable.
\end{lemma}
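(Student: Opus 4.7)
The plan is to establish the stronger second assertion, which immediately implies the first. I would bound the fibers of the map
$$\Phi \colon \{0,1\}^{\mathbb N} \longrightarrow R\lb x\rb \wedge_{R[x]} R\lb x\rb, \qquad \alpha \longmapsto p_\alpha \wedge 1;$$
showing the fibers are contained in countable sets is enough, since $\{0,1\}^{\mathbb N}$ is uncountable and so the image will be uncountable as well.

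Suppose that $\Phi(\alpha) = \Phi(\beta)$. Then $(p_\alpha - p_\beta) \wedge 1 = 0$, and Lemma \ref{rational} forces the power series $p_\alpha - p_\beta$ to be rational. I would then stare at $p_\alpha - p_\beta = \sum_{n \geq 1} (\alpha_n - \beta_n) x^{2^n}$: its coefficients lie in $\{-1, 0, 1\}$ and the nonzero ones sit exactly at those positions $2^n$ for which $\alpha_n \neq \beta_n$. If $\alpha$ and $\beta$ differed at infinitely many indices $n_1 < n_2 < \cdots$, then between consecutive nonzero terms there would be a run of $2^{n_{j+1}} - 2^{n_j} - 1 \geq 2^{n_j} - 1$ zero coefficients, a quantity tending to infinity, and moreover $p_\alpha - p_\beta$ would not be a polynomial. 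Lemma \ref{lemma_not_rational} would then render $p_\alpha - p_\beta$ non-rational, a contradiction. Hence $\Phi(\alpha) = \Phi(\beta)$ forces $\alpha$ and $\beta$ to agree off a finite set.

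To conclude, I would observe that the equivalence relation ``agreement off a finite set'' on $\{0,1\}^{\mathbb N}$ has countable equivalence classes (each class is parametrized by a finite subset of indices to be flipped, together with the resulting bit pattern on that finite set), while $\{0,1\}^{\mathbb N}$ has cardinality $2^{\aleph_0}$, so there are uncountably many classes. Since each fiber of $\Phi$ is contained in a single class, the image $\Phi(\{0,1\}^{\mathbb N})$ is uncountable, giving the ``moreover'' statement, whence also the first assertion. The only content-carrying step is the lacunarity check that feeds Lemma \ref{lemma_not_rational}, which is routine given how sparsely the exponents $2^n$ are distributed; I do not anticipate any genuine obstacle.
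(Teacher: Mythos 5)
Your proposal is correct and follows essentially the same route as the paper: both arguments rest on the equivalence relation ``agree off a finite set'' on $\{0,1\}^{\mathbb N}$, the countability of its classes, and the combination of Lemma \ref{lemma_not_rational} with Lemma \ref{rational} to show that sequences in different classes yield distinct elements $p_\alpha \wedge 1$. The only cosmetic difference is that the paper picks a transversal of the equivalence classes and shows the map is injective on it, whereas you bound the fibers of $\Phi$ by equivalence classes; these are interchangeable.
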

\begin{proof} Consider an equivalence relation on the set of all sequences  $\{0,1\}^{\mathbb N}$ such that $(\alpha_n) \sim (\beta_n)$ if and only if the set $\{n\mid \alpha_n\ne \beta_n\}$ is finite. All equivalence classes of this equivalence relation are countable. Hence the quotient set $\{0,1\}^{\mathbb N}/\sim $ is uncountable. Take a set $A\subseteq \{0,1\}^{\mathbb N}$ which intersects with any equivalence class by one element. Then $A$ is uncountable.   \Cref{lemma_not_rational} implies that $p_\alpha-p_\beta$ is not rational for any distinct $\alpha,\beta\in A.$ \Cref{rational} implies that $p_\alpha\wedge 1 \ne p_\beta \wedge 1 \in R\lb x\rb \wedge_{R[x]} R\lb x\rb.$ The assertion follows. 
\end{proof}

Let $R$ be an integral domain and  $\f_R = \f_{R}(a, b)$ be the free Lie algebra over $R$ of rank two. Denote by 
$$\varphi: \f_R\to \L_R $$
the Lie algebra homomorphism which is defined on generators  by $a \mapsto 1_{R\lb x\rb}$ and $b \mapsto t.$ Then $\varphi$ induces a map $H_2(\hat \f_R,R)\to H_2(\hat \L_R,R).$

\begin{theorem}\label{image}
The image of the map $$H_2(\hat{\f}_R, R) \to H_2(\hat{\L}_{R}, R)$$ is
uncountable.
\end{theorem}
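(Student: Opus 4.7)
For each $\alpha \in \{0,1\}^{\mathbb N}$, the plan is to construct an explicit cycle $\xi_\alpha \in \hat{\f}_R \Wedge \hat{\f}_R$ whose class in $H_2(\hat{\f}_R, R)$ maps to $p_\alpha \wedge 1 \in R\lb x\rb \wedge_{R[x]} R\lb x\rb$ under the identifications of Lemmas \ref{Lamplighter homology} and \ref{Coinvariants description}. Combined with Lemma \ref{lemma_image_uncount}, which shows that $\{p_\alpha \wedge 1\}_\alpha$ is uncountable, this will prove the theorem.

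Writing $d_k := [a, {}_k b]$, so that $d_0 = a$ and $d_k = [d_{k-1}, b]$, I set $u_\alpha := \sum_{n \geq 1}\alpha_n\, d_{2^n} \in \hat{\f}_R$, which is well-defined because its summands have degrees $2^n + 1 \to \infty$. Then $\varphi(u_\alpha) = p_\alpha$ and $\varphi(a) = 1$, so $u_\alpha \Wedge a$ lifts $p_\alpha \Wedge 1 \in \hat{\L}_R \Wedge \hat{\L}_R$. Its boundary $[u_\alpha, a] = \sum_n \alpha_n\,[d_{2^n}, d_0]$ is generally nonzero, so I add a correction $-\eta_\alpha \Wedge b$, where $\eta_\alpha \in \hat{\f}_R$ is chosen so that $[\eta_\alpha, b] = [u_\alpha, a]$ and $\varphi(\eta_\alpha) = 0$; then $\xi_\alpha := u_\alpha \Wedge a - \eta_\alpha \Wedge b$ will be a cycle whose image under $\varphi_*$ is $p_\alpha \Wedge 1$. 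To produce $\eta_\alpha$, I use the telescoping identity
\[
[d_{2^n}, d_0] \;=\; \sum_{k=0}^{2^{n-1}-1} (-1)^k\, \bigl[[d_{2^n-1-k}, d_k], b\bigr],
\]
obtained by iterating the Jacobi-derived reduction $[d_m, d_n] = [[d_{m-1}, d_n], b] - [d_{m-1}, d_{n+1}]$ until the index pair reaches $(2^{n-1}, 2^{n-1})$, at which point $[d_{2^{n-1}}, d_{2^{n-1}}] = 0$ terminates the recursion. Then
\[
\eta_\alpha \;:=\; \sum_{n \geq 1}\alpha_n \sum_{k=0}^{2^{n-1}-1}(-1)^k\,[d_{2^n-1-k}, d_k]
\]
is a well-defined element of $\hat{\f}_R$ (each inner block has total degree $2^n+1$), the identity gives $[\eta_\alpha, b] = [u_\alpha, a]$, and $\varphi(\eta_\alpha) = 0$ because $\varphi([d_{2^n-1-k}, d_k]) = [x^{2^n-1-k}, x^k] = 0$ in the abelian Lie algebra $R\lb x\rb$.

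The main obstacle I anticipate is establishing the closed-form telescoping identity for $[d_{2^n}, d_0]$: it requires an induction on the number of Jacobi reductions together with careful tracking of signs and indices, and one must verify that the recursion genuinely terminates at the diagonal pair $(2^{n-1}, 2^{n-1})$. Once that identity is in place, the rest of the argument is essentially mechanical: $\xi_\alpha$ is a cycle by construction, its image in $\hat{\L}_R \Wedge \hat{\L}_R$ is $p_\alpha \Wedge 1$ since $\varphi(\eta_\alpha) = 0$, and under Lemmas \ref{Lamplighter homology} and \ref{Coinvariants description} this corresponds to $p_\alpha \wedge 1 \in R\lb x\rb \wedge_{R[x]} R\lb x\rb$, so Lemma \ref{lemma_image_uncount} supplies uncountably many distinct such classes in the image.
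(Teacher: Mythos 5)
Your proposal is correct and takes essentially the same route as the paper: your cycle $u_\alpha \Wedge a - \eta_\alpha \Wedge b$ coincides with the paper's $R_\alpha \Wedge a + S_\alpha \Wedge b$ (your $\eta_\alpha$ is $-S_\alpha$), and the telescoping identity you derive by iterating the Jacobi reduction is exactly the identity the paper imports from \cite{IM}. The only (harmless) difference is that you prove that identity directly instead of citing it.
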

\begin{proof}
Set $\f=\f_R$ and $\L=\L_R.$ For a Lie algebra $\g$ we regard its second homology group as $\Ker(\g \Wedge \g \to \g)$ using \Cref{Ellis description}. Then the induced on homology map is $\hat{\varphi} \Wedge \hat{\varphi},$ where $\hat{\varphi}: \hat{\f} \to \hat{\L}$ is induced by $\varphi.$ 
Denote by $r_{2n}, s_{2n}$ the following elements from $\f$
    $$r_{2n} = [a, {}_{2n}b], \ \ \  s_{2n} = \left(\sum\limits^{n}_{i=1} (-1)^i[[a, {}_{2n-i}b], [a, {}_{i-1}b]] \right),$$
and for any $\alpha \in \{0, 1\}^{\mathbb N}$ denote by $R_\alpha, S_\alpha$ the following elements from $\hat{\f}$
    $$R_\alpha = \sum_{n \in \mathbb N} \alpha_n r_{2n}, \ \ \   S_\alpha = \sum_{n \in \mathbb N} \alpha_n s_{2n}.$$
It was shown in \cite[Lemma 4.1]{IM} that for any Lie algebra $\mathfrak{g}$ the following identities hold for all $x, y \in \mathfrak{g}$ and for $n \geqslant 1:$
    $$[[x, {}_{2n}y], x] = \left[\sum\limits^{n-1}_{i=0} (-1)^i[[x, {}_{2n-1-i}y], [x, {}_i y]], y\right].$$
Therefore we have $[r_{2n}, a] + [s_{2n}, b] = 0,$ and furthermore we get $[R_\alpha, a] + [S_\alpha, b] = 0.$ This means that $R_\alpha \Wedge a + S_\alpha \Wedge b \in \Ker(\hat{\f} \Wedge \hat{\f} \to \hat{\f})$ and represents an element in homology. The image $\varphi(r_{2n}) = [\varphi(a), {}_{2^n} \varphi(b)] = x^{2n}$ and $\varphi(s_{2n}) = 0,$ then $\hat{\varphi}(R_\alpha) = \sum_{n \in \mathbb N} \alpha_n x^{2n}$ and $S_\alpha \in \Ker(\hat{\varphi}).$ Hence 
    $$(\hat{\varphi} \Wedge \hat{\varphi})\left(R_\alpha \Wedge a + S_\alpha \Wedge b\right) = \left(\sum_{n \in \mathbb N} \alpha_n x^{2n}\right) \wedge 1_{R\lb x \rb}.$$
The isomorphism $H_2(\hat{\L}, R) \cong R\lb x\rb \wedge_{R[x]} R\lb x\rb$ constructed in \Cref{Lamplighter homology} and \Cref{Coinvariants description} sends an element $\sum_i p_i \Wedge q_i$ to $\sum_i p_i \wedge \sigma(q_i).$ Denote by $\widetilde{\varphi}$ the composition of $\hat{\varphi} \Wedge \hat{\varphi}$ with that identification, then the image 
    $$\widetilde{\varphi}(R_\alpha \Wedge a + S_\alpha \Wedge b) = \sum_{n \in \mathbb N} \alpha_n x^{2n} \wedge 1.$$
Then by \Cref{lemma_image_uncount} the image is uncountable. 
\end{proof}

\subsection{Proof of Theorem A}

Throughout the section we denote by $\k$ a field of characteristic $2$.

\begin{lemma}\label{lemmaidentities}
Suppose that $\g$ is a Lie algebra over $\k$ and $a, b, c \in \g.$ Then the following identities hold
$$[[a, {}_{2^n} b], [c, {}_{2^n} b]] = [a, {}_{2^{n+1}} b, c] + [a, {}_{2^n} b, c, {}_{2^n} b]$$
for $n \geqslant 0.$ Hence for any $a, b \in \g$
\begin{align*}
& [a, {}_{2^{n+1}} b, a] = [a, {}_{2^n} b, a, {}_{2^n}b],
& [a, {}_{2^{n+1}} b, a] = [a, b, a, {}_{2^{n+1} - 1} b].
\end{align*}
\end{lemma}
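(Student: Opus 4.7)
The plan is to derive the main identity from a single Leibniz-type expansion, then deduce both derived identities as quick corollaries. Since the map $D\colon \g \to \g$, $Dx = [x,b]$, is a derivation of the Lie bracket, iteration gives
$$[[u,v],{}_k b] \;=\; \sum_{i=0}^{k} \binom{k}{i} \bigl[[u,{}_i b],\,[v,{}_{k-i} b]\bigr]$$
for all $u,v \in \g$ and $k \geq 0$. Specialize to $k = 2^n$: by Lucas' theorem, $\binom{2^n}{i}$ is odd exactly when $i \in \{0, 2^n\}$, so over $\k$ the sum collapses to the two boundary terms. Substituting $u = [a, {}_{2^n} b]$ and $v = c$, the left-hand side becomes $[a,{}_{2^n} b, c, {}_{2^n} b]$, the $i=0$ term is $[[a,{}_{2^n} b],[c,{}_{2^n} b]]$, and the $i = 2^n$ term is $[[a,{}_{2^{n+1}} b], c] = [a,{}_{2^{n+1}} b, c]$. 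Solving for the cross bracket (signs are irrelevant in characteristic $2$) yields the claimed identity.

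Next I would set $c = a$ in that identity. The left-hand side becomes $[[a,{}_{2^n}b],[a,{}_{2^n}b]] = 0$, which immediately forces
$$[a,{}_{2^{n+1}} b, a] \;=\; [a,{}_{2^n} b, a, {}_{2^n} b],$$
giving the first derived identity. The second derived identity then follows by induction on $n$, using the first derived identity at each step to rewrite
$$[a,{}_{2^{n+2}} b, a] \;=\; [[a,{}_{2^{n+1}} b, a], {}_{2^{n+1}} b],$$
and applying the inductive hypothesis to the inner factor. The base case $n = 0$ is furnished for free by the first derived identity itself, since it reads $[a, {}_2 b, a] = [a,b,a,b]$, which is exactly the second identity at $n = 0$.

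The only substantive content is the Lucas-theorem collapse of $\binom{2^n}{i}$ in characteristic $2$; everything else is formal manipulation of left-normalized Engel brackets. The main bookkeeping risk is conflating $[[u,v], {}_k b]$ with $[u,[v,{}_k b]]$ while applying the Leibniz formula, so I would be careful to write the substitution $u = [a,{}_{2^n} b]$ explicitly rather than "inside" the iterated adjoint, and use $[x,x] = 0$ plus $\mathrm{char}\,\k = 2$ wherever a sign is convenient.
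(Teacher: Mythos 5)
Your proof is correct, but it reaches the main identity by a genuinely different route than the paper. The paper proves $[[a,{}_{2^n}b],[c,{}_{2^n}b]]=[a,{}_{2^{n+1}}b,c]+[a,{}_{2^n}b,c,{}_{2^n}b]$ by induction on $n$: the base case is the Jacobi identity, and the induction step expands the bracket via the inductive hypothesis applied to $a_1=[a,{}_{2^n}b]$ and $c_1=[c,{}_{2^n}b]$ and then observes that two of the resulting four terms coincide and cancel in characteristic $2$. You instead apply the iterated Leibniz rule for the derivation $D=\mathrm{ad}(b)$, namely $[[u,v],{}_k b]=\sum_{i=0}^{k}\binom{k}{i}[[u,{}_i b],[v,{}_{k-i}b]]$, with $k=2^n$, and invoke Lucas' theorem to collapse the sum to the $i=0$ and $i=2^n$ terms; substituting $u=[a,{}_{2^n}b]$, $v=c$ and rearranging gives the identity in one stroke. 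Your argument is non-inductive for the main identity and makes transparent why powers of $2$ are the relevant exponents, at the modest cost of importing the binomial congruence; the paper's induction is self-contained but hides that structural reason in the cancellation step. The two derived identities are handled essentially as in the paper (set $c=a$ and use $[x,x]=0$; then iterate the resulting relation, which you phrase as an induction and the paper phrases as ``applying that identity $n$ times''), and your bookkeeping there checks out, including the base case $[a,{}_2 b,a]=[a,b,a,b]$.
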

\begin{proof}
The case $n = 0$ follows from the Jacobi identity. Let us denote
$$a_1 = [a, {}_{2^n} b], \ \ \ c_1 = [c, {}_{2^n} b], \ \ \ a_2 = [a, {}_{2^{n+1}} b].$$ By induction hypothesis we have
\begin{align*}
    &[[a, {}_{2^{n+1}} b], [c, {}_{2^{n+1}} b]] = [a_1, {}_{2^{n+1}} b, c_1] + [a_1, {}_{2^n} b, c_1, {}_{2^n} b],\\
    &[a_1, {}_{2^{n+1}} b, c_1] = [a_2, {}_{2^{n+1}} b, c] + [a_2, {}_{2^n} b, c, {}_{2^n} b],\\
    &[a_1, {}_{2^n} b, c_1, {}_{2^n} b] = [a_1, {}_{2^{n+1}} b, c, {}_{2^n} b] + [a_1, {}_{2^n} b, c, {}_{2^{n+1}} b]
\end{align*}
The terms $[a_1, {}_{2^{n+1}}b, c, {}_{2^n}b]$ and $[a_2, {}_{2^{n}} b, c,_{2^n} b]$ are equal, therefore they cancel each other. This completes the induction step. By substituting $c = a$ we obtain 
    $$0 = [[a, {}_{2^k}b], [a, {}_{2^k}b]] = [a, {}_{2^{k+1}} b, a] + [a, {}_{2^k} b, a, {}_{2^k} b],$$
so $[a, {}_{2^{k+1}} b, a] = [a, {}_{2^k} b, a, {}_{2^k} b]$ for all $k \geqslant 0$ and the last part of the statement is obtained by applying that identity $n$ times.
\end{proof}

Next we consider the Lie algebra $\b$ defined in the introduction.

\begin{lemma}\label{H_2(B)}
The second homology group $H_2(\b,\k) = 0.$
\end{lemma}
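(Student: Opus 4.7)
The plan is to realize $\b$ as a filtered colimit of \emph{free} Lie algebras and then invoke the fact that Lie algebra homology commutes with filtered colimits. For each $n \geq 1$, I would define $\b_n$ to be the Lie algebra over $\k$ on the finite generating set $\{a, b, x_1, \ldots, x_n, y_1, \ldots, y_n\}$ subject only to those defining relations of $\b$ involving these generators alone, namely
\[
x_i = [a, b, b] + [x_{i+1}, {}_{2^i} b], \qquad y_i = [a, b, a] + [y_{i+1}, {}_{2^i} b]
\]
for $1 \leq i \leq n-1$. The natural homomorphism $\b_n \to \b_{n+1}$ sending each generator to itself is well-defined (the relations of $\b_n$ form a subset of those of $\b_{n+1}$), and the induced comparison ${\sf colim}_n\, \b_n \to \b$ is an isomorphism, because every generator of $\b$ lies in some $\b_n$ and every defining relation of $\b$ holds in some $\b_n$.

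The crucial step is to show that each $\b_n$ is the free Lie algebra on the four generators $a, b, x_n, y_n$. Indeed, the $i$-th relation explicitly solves $x_i$ as a Lie polynomial in $a, b, x_{i+1}$, so iterative back-substitution expresses the redundant generators $x_{n-1}, \ldots, x_1$ (and similarly $y_{n-1}, \ldots, y_1$) in terms of $a, b, x_n, y_n$. Formally, I would identify $\b_n$ with $\f_\k(a, b, x_n, y_n)$ by exhibiting an explicit inverse to the tautological surjection $\f_\k(a, b, x_n, y_n) \twoheadrightarrow \b_n$: the inverse sends $a, b, x_n, y_n$ to themselves and the remaining generators to their back-substituted expressions, after which one checks that the defining relations of $\b_n$ map to zero. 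Granted this identification, $H_2(\b_n, \k) = 0$ by \Cref{Ellis description} (since $\f \Wedge \f \to \f$ is injective for a free Lie algebra), and commuting $H_2(-, \k)$ with the filtered colimit yields $H_2(\b, \k) = 0$.

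The only step requiring genuine care is the identification $\b_n \cong \f_\k(a, b, x_n, y_n)$: the elimination of $x_1, \ldots, x_{n-1}$ by back-substitution is conceptually transparent, but one must confirm that no hidden relation on $a, b, x_n, y_n$ is thereby imposed, which is exactly what the inverse-map construction achieves.
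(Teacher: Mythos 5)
Your argument is correct, but it takes a genuinely different route from the paper's. The paper works with the single global presentation $0 \to \mathfrak{r} \to \mathfrak{e} \to \b \to 0$, where $\mathfrak{e}$ is free on \emph{all} the generators and $\mathfrak{r}$ is the ideal generated by the relators $\alpha_i = x_i - [a,b,b] - [x_{i+1},{}_{2^i}b]$ and $\beta_i = y_i - [a,b,a] - [y_{i+1},{}_{2^i}b]$; it then applies Hopf's formula, observing that $\alpha_i \equiv x_i$ and $\beta_i \equiv y_i \pmod{[\mathfrak{e},\mathfrak{e}]}$, so the relators are linearly independent in $\mathfrak{e}/[\mathfrak{e},\mathfrak{e}]$, which forces $\mathfrak{r} \cap [\mathfrak{e},\mathfrak{e}] = [\mathfrak{r},\mathfrak{e}]$ and hence $H_2(\b,\k)=0$. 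Your Tietze-elimination-plus-colimit argument exploits exactly the same structural feature of the presentation --- each relator solves for one generator in terms of later ones --- but packages it at the level of presentations rather than abelianizations. Your route buys more: it exhibits $\b$ as a sequential colimit of free Lie algebras of rank four, so it immediately yields $H_i(\b,\k)=0$ for all $i \geqslant 2$, not just $i=2$; the cost is that you must justify commutation of homology with filtered colimits (standard via the Chevalley--Eilenberg complex) and the identification $\b_n \cong \f_\k(a,b,x_n,y_n)$, which your universal-property/inverse-map argument does handle correctly. The paper's route is a one-step computation requiring only Hopf's formula. One small remark on your write-up: the cleanest justification that $H_2(\b_n,\k)=0$ is that a free Lie algebra over a field has cohomological dimension one (the augmentation ideal of its enveloping algebra is a free module); this is equivalent, via \Cref{Ellis description}, to the injectivity of $\f \Wedge \f \to \f$ that you invoke.
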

\begin{proof}
Let $\mathfrak{e} = \f_{\k}(\{a, b\} \cup \{x_i, y_i \mid i \in \mathbb N\})$ and set $\alpha_i = x_i - [a,b,b] - [x_{i + 1},_{2^i} b],$ $\beta_i = y_i - [a, b, a] - [y_{i + 1}, {}_{2^i} b]$ for $i \geqslant 1.$ Then we have the following short exact sequence 
    $$0 \rightarrow \mathfrak{r} \rightarrow \mathfrak{e} \rightarrow \b \rightarrow 0,$$
where $\mathfrak{r}$ is the ideal generated by the set $\{\alpha_i, \beta_i \mid i \in \mathbb N\}.$ Since 
    $$\mathfrak{r} = \sum_{i \in \mathbb N}{[\alpha_i, \mathfrak{e}]} + \sum_{i \in \mathbb N}{[\beta_i, \mathfrak{e}]} + \sum_{i \in \mathbb N}{\k \alpha_i} + \sum_{i \in \mathbb N}{\k \beta_i},$$ 
and $\alpha_i \equiv x_i,$ $\beta_i \equiv y_i \pmod{[\mathfrak{e}, \mathfrak{e}]},$ we have 
    $$\mathfrak{r} \cap [\mathfrak{e}, \mathfrak{e}] = \sum_{i \in \mathbb N}{[\alpha_i, \mathfrak{e}]} + \sum_{i \in \mathbb N}{[\beta_i, \mathfrak{e}]} = [\mathfrak{r}, \mathfrak{e}]$$ 
and the statement follows from the Hopf's formula.
\end{proof}

We say that a homomorphism of Lie algebras $f:\g \to \h $ is {\it 2-connected}, if it induces an isomorphism $H_1(\g,R)\cong H_1(\h,R)$ and an epimorphism $H_2(\g, R)\twoheadrightarrow H_2(\h, R).$ 
The following theorem is a  version of Stallings' theorem \cite[Theorem 3.4]{Stallings} for Lie algebras whose prove is similar to the proof of Stallings.

\begin{theorem}[{Stallings' theorem for Lie algebras, cf. \cite{Stallings}}]\label{Stallings}
Let $R$ be an associative commutative ring and $f: \g \to \h$ be a 2-connected homomorphism of Lie algebras over $R.$ Then $f$ induces isomorphisms
$$\g/\gamma_n(\g) \cong \h/\gamma_n(\h)$$
for any $n \geqslant 1.$
\end{theorem}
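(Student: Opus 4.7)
The plan is to adapt Stallings' original proof to the Lie algebra setting, replacing the five-term exact sequence of a group extension by its Lie-algebra analogue arising from the Lyndon--Hochschild--Serre spectral sequence. I argue by induction on $n$ that the induced map
\begin{equation*}
\bar f_n \colon \g/\gamma_n(\g) \longrightarrow \h/\gamma_n(\h)
\end{equation*}
is an isomorphism.

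The base case $n=2$ is immediate from the identification $\g/\gamma_2(\g) \cong H_1(\g,R)$ and the 2-connectedness hypothesis. For the induction step, suppose $\bar f_n$ is an isomorphism and consider the five-term exact sequence attached to $0 \to \gamma_n(\g) \to \g \to \g/\gamma_n(\g) \to 0$ and to its $\h$-analogue. Since $[\g, \gamma_n(\g)] = \gamma_{n+1}(\g)$ and $[\gamma_n(\g), \gamma_n(\g)] \subseteq \gamma_{n+1}(\g)$, the third term in each sequence is exactly $\gamma_n/\gamma_{n+1}$, giving a ladder of the form
\begin{equation*}
H_2(\g,R) \to H_2(\g/\gamma_n(\g),R) \to \gamma_n(\g)/\gamma_{n+1}(\g) \to H_1(\g,R) \to H_1(\g/\gamma_n(\g),R) \to 0
\end{equation*}
over its counterpart for $\h$. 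In this ladder, the first vertical arrow is surjective and the fourth is an isomorphism by 2-connectedness, while the second and fifth are isomorphisms by the inductive hypothesis applied to $\bar f_n$. The five-lemma then forces an isomorphism $\gamma_n(\g)/\gamma_{n+1}(\g) \xrightarrow{\sim} \gamma_n(\h)/\gamma_{n+1}(\h)$ in the middle column.

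A second application of the five-lemma, now to the ladder coming from
\begin{equation*}
0 \to \gamma_n(\g)/\gamma_{n+1}(\g) \to \g/\gamma_{n+1}(\g) \to \g/\gamma_n(\g) \to 0
\end{equation*}
and its $\h$-analogue --- whose outer vertical arrows are isomorphisms by the previous paragraph and the inductive hypothesis --- yields an isomorphism of underlying $R$-modules $\g/\gamma_{n+1}(\g) \xrightarrow{\sim} \h/\gamma_{n+1}(\h)$, which is automatically a Lie algebra isomorphism since $\bar f_{n+1}$ is already a homomorphism. This closes the induction.

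The only potential pitfall is identifying the third term of the five-term sequence: one needs $H_1(\gamma_n(\g),R)_{\g/\gamma_n(\g)} \cong \gamma_n(\g)/\gamma_{n+1}(\g)$, which is a direct reading of the $E^2$-page of the Lyndon--Hochschild--Serre spectral sequence together with the elementary observation that for an ideal $\mathfrak{i}\triangleleft\g$ the coinvariants of $\mathfrak{i}^{ab}$ under the quotient action of $\g/\mathfrak{i}$ coincide with $\mathfrak{i}/[\g,\mathfrak{i}]$. Beyond this bookkeeping, no new difficulty arises compared with the classical group-theoretic argument.
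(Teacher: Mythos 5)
Your proof is correct and follows essentially the same route as the paper: induction on $n$, the five-term exact sequence of the Lyndon--Hochschild--Serre spectral sequence for $\gamma_n(\g)\rightarrowtail \g \twoheadrightarrow \g/\gamma_n(\g)$ with the identification of its third term as $\gamma_n(\g)/\gamma_{n+1}(\g)$, the five lemma to transfer the isomorphism to the lower central quotients, and a final comparison of the extensions $\gamma_n/\gamma_{n+1}\rightarrowtail \g/\gamma_{n+1}\twoheadrightarrow \g/\gamma_n$. The only difference is cosmetic indexing ($n\mapsto n+1$ versus $n-1\mapsto n$).
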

\begin{proof} For simplicity in this proof we use the notation $H_i(\g):=H_i(\g,R).$ The proof is by induction. 
For $n=1$ the statement is obvious. For $n=2$ the statement follows from the isomorphism $H_1(\g)=\g/\gamma_2(\g).$ Assume now that $n\geq 3.$ Note that $H_1(\gamma_{n-1}(\g))=\gamma_{n-1}(\g)/[\gamma_{n-1}(\g),\gamma_{n-1}(\g)]$ and 
$$H_0(\g/\gamma_{n-1}, H_1(\gamma_{n-1}(\g)))=H_0(\g, H_1(\gamma_{n-1}(\g)))=\gamma_{n-1}(\g)/\gamma_{n}(\g).$$ By induction hypothesis $f$ induces an isomorphism $\g/\gamma_{n-1}(\g)\cong \h/\gamma_{n-1}(\h).$  The short exact sequence $0\to \gamma_{n-1}(\g)\to \g \to \g/\gamma_{n-1}(\g)\to 0$ induces the following 5-term exact sequence of the Lyndon–Hochschild–Serre spectral sequence
$$H_2(\g) \to H_2(\g/\gamma_{n-1}(\g)) \to  \gamma_{n-1}(\g)/\gamma_n(\g) \to H_1(\g) \to H_1(\g/\gamma_{n-1}(\g)) \to 0.$$ 
If we compare two such five-term exact sequences for $\g$ and $\h$, use the induction hypothesis and the five lemma, we obtain that $f$ induces an isomorphism $\gamma_{n-1}(\g)/\gamma_n(\g)\cong \gamma_{n-1}(\h)/\gamma_n(\h).$ Combing this isomorphism with the isomorphism $ \g/\gamma_{n-1}(\g)\cong \h/\gamma_{n-1}(\h)$ we obtain the isomorphism $ \g/\gamma_{n}(\g)\cong \h/\gamma_{n}(\h).$
\end{proof}

\begin{lemma}
The Lie algebra $\a = \b / \gamma_\omega(\b)$ is parafree.
\end{lemma}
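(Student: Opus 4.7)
The plan is to apply \Cref{Stallings} to the natural homomorphism $\phi\colon \f \to \b$, where $\f = \f_\k(a,b)$ is the free Lie algebra over $\k$ on two generators that we name $a,b$, and $\phi$ sends them to the elements $a,b \in \b$. Parafreeness of $\a$ is then witnessed by the composite $\psi\colon \f \xrightarrow{\phi} \b \twoheadrightarrow \a$: \Cref{Stallings} produces isomorphisms on the lower central quotients of $\b$, and these transfer to $\a$ because $\gamma_\omega(\b) \subseteq \gamma_n(\b)$ for every $n$.

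To invoke \Cref{Stallings} I need $\phi$ to be 2-connected. The epimorphism condition on $H_2(-,\k)$ is trivially satisfied because $H_2(\b,\k) = 0$ by \Cref{H_2(B)}. For the isomorphism on $H_1(-,\k)$, I use the presentation $0 \to \mathfrak{r} \to \mathfrak{e} \to \b \to 0$ constructed in the proof of \Cref{H_2(B)}: the generators $\alpha_i = x_i - [a,b,b] - [x_{i+1},{}_{2^i}b]$ and $\beta_i = y_i - [a,b,a] - [y_{i+1},{}_{2^i}b]$ of $\mathfrak{r}$ reduce to $x_i$ and $y_i$ modulo $[\mathfrak{e},\mathfrak{e}]$, so passing to abelianizations kills all the $x_i$ and $y_i$ and presents $\b^{\mathrm{ab}}$ as the free $\k$-module on $\{a,b\}$, matching $\f^{\mathrm{ab}}$ under $\phi$. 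Hence \Cref{Stallings} yields $\f/\gamma_n(\f) \cong \b/\gamma_n(\b)$ for every $n \geq 1$.

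Composing with the canonical isomorphism $\b/\gamma_n(\b) \cong \a/\gamma_n(\a)$ (which holds because $\gamma_n(\a) = \gamma_n(\b)/\gamma_\omega(\b)$ and the kernel of $\b/\gamma_n(\b) \to \a/\gamma_n(\a)$ is $(\gamma_n(\b)+\gamma_\omega(\b))/\gamma_n(\b) = 0$) gives isomorphisms $\f/\gamma_n(\f) \cong \a/\gamma_n(\a)$ induced by $\psi$, which is the first condition for parafreeness. The remaining condition $\gamma_\omega(\a) = 0$ is the tautology
\[
\gamma_\omega\bigl(\b/\gamma_\omega(\b)\bigr) = \Bigl(\bigcap_n \gamma_n(\b)\Bigr)\big/\gamma_\omega(\b) = \gamma_\omega(\b)/\gamma_\omega(\b) = 0.
\]
The only step with any content is the abelianization calculation, and even this is forced immediately by the shape of the defining relations: each one writes some $x_i$ or $y_i$ as a sum of commutators in $\mathfrak{e}$, so those generators die in $\b^{\mathrm{ab}}$ and leave $\{a,b\}$ as a free basis. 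Everything else is bookkeeping against \Cref{H_2(B)} and \Cref{Stallings}.
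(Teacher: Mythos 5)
Your proof is correct and follows essentially the same route as the paper: apply the Lie-algebra version of Stallings' theorem (\Cref{Stallings}) to the map $\f_\k(a,b)\to\b$, using \Cref{H_2(B)} for the $H_2$ surjectivity, and then pass to $\a=\b/\gamma_\omega(\b)$. You merely spell out the $H_1$ computation and the identification $\b/\gamma_n(\b)\cong\a/\gamma_n(\a)$, which the paper leaves implicit.
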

\begin{proof}
Suppose that $\f = \f_{\k}(\{a, b\})$ and consider the map $\varphi: \f \to \b$ that is identical on $a$ and $b.$ Since $\varphi$ induces an isomorphism on $H_1(\f, \k) \to H_1(\b, \k)$ and a surjective map on the second homology group by \Cref{H_2(B)}, it induces isomorphisms $\f/\gamma_n(\f) \xrightarrow{\sim} \b/\gamma_n(\b) = \a / \gamma_n(\a)$ by the \Cref{Stallings}, and so on pronilpotent completions.
\end{proof}

The next lemma is easy to prove by induction.
\begin{lemma}\label{lemmaidentities2}
There are the following identities in $\a/\gamma_{2^n}(\a)$
    $$x_k \equiv [a, b, b] + \sum\limits_{i = k+1}^n [a, {}_{2^i - 2^k + 2} b]  \pmod{\gamma_{2^n}(\a)},$$
    $$y_k \equiv [a, b, a] + \sum\limits_{i = k+1}^n [a, b, a, {}_{2^i - 2^k} b]  \pmod{\gamma_{2^n}(\a)}.$$
Therefore, the images of $x_1$ and $y_1$ under the inclusion $\a \to \hat{\a}$ are the following  
    $$x_1 = [a, b, b] + \sum_{i \geqslant 2}{[a, {}_{2^i}b]}, \ \ \ y_1 = [a, b, a] + \sum_{i \geqslant 2}{[a, b, a, {}_{2^i - 2} b]}.$$
\end{lemma}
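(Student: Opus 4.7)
The plan is to prove the congruences by induction on $d := n-k \geqslant 0$, proving both the $x_k$ and $y_k$ statements simultaneously for each pair $(n,k)$ with $n \geqslant k$. The key mechanism is that the defining relation $x_k = [a,b,b] + [x_{k+1},{}_{2^k}b]$ (and similarly for $y_k$) shifts one step at a time in both the length of the commutator and the index $k$, and the identities $2^{k+1} - 2^k = 2^k$ and $(2^i - 2^{k+1} + 2) + 2^k = 2^i - 2^k + 2$ will make the partial sums telescope into the desired form.

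For the base case $d = 0$, i.e.\ $n = k$, the asserted sum is empty, so we must show $x_k \equiv [a,b,b] \pmod{\gamma_{2^k}(\a)}$, and analogously $y_k \equiv [a,b,a] \pmod{\gamma_{2^k}(\a)}$. This is immediate from the defining relations, since $x_{k+1},\, y_{k+1} \in \gamma_1(\a) = \a$ and therefore $[x_{k+1},{}_{2^k} b], [y_{k+1},{}_{2^k} b] \in \gamma_{2^k+1}(\a) \subseteq \gamma_{2^k}(\a)$.

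For the inductive step, fix $n > k$ and assume the formula has been established for the pair $(n, k+1)$, whose gap is $d-1$. Substituting the inductive expression for $x_{k+1}$ into the defining relation $x_k = [a,b,b] + [x_{k+1},{}_{2^k} b]$ and using that $u \in \gamma_w(\a)$ implies $[u,{}_{2^k}b] \in \gamma_{w+2^k}(\a)$, the remainder term in the inductive hypothesis becomes an element of $\gamma_{2^n + 2^k}(\a) \subseteq \gamma_{2^n}(\a)$. Distributing the $2^k$ extra adjoint actions of $b$ through the sum turns each summand $[a,{}_{2^i - 2^{k+1} + 2}b]$ into $[a,{}_{2^i - 2^k + 2}b]$, and turns the leading $[a,b,b]$ into $[a,{}_{2^k+2}b]$, which is precisely the new $i = k+1$ summand. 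This gives the claimed formula for $x_k$ modulo $\gamma_{2^n}(\a)$. The argument for $y_k$ is verbatim the same, using $2^{k+1} - 2^k = 2^k$ to match the initial new term $[a,b,a,{}_{2^k}b]$. There is no real obstacle here beyond careful bookkeeping of exponents.

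Finally, for the closed form in $\hat{\a}$, specialize to $k = 1$: the formula reads $x_1 \equiv [a,b,b] + \sum_{i=2}^{n}[a,{}_{2^i}b] \pmod{\gamma_{2^n}(\a)}$ and $y_1 \equiv [a,b,a] + \sum_{i=2}^{n}[a,b,a,{}_{2^i-2}b] \pmod{\gamma_{2^n}(\a)}$, since $2^i - 2 + 2 = 2^i$. The summands have strictly increasing weight, so the infinite sums converge in $\hat{\a} = \varprojlim \a/\gamma_m(\a)$, and passing to the projective limit gives the stated expressions for $x_1$ and $y_1$.
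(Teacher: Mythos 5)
Your proof is correct and is exactly the induction the paper has in mind (the paper only remarks that the lemma ``is easy to prove by induction'' and omits the details). The bookkeeping of exponents, the telescoping via $2^{i}-2^{k+1}+2^{k}=2^{i}-2^{k}$, and the passage to the limit in $\hat{\a}$ are all handled correctly.
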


Lemmas \ref{lemmaidentities} and \ref{lemmaidentities2} imply that
    $$[x_1, a] + [y_1, b] = 0 \in \hat{\a},$$
and since $\a$ is embedded in $\hat \a,$ the element $x_1 \Wedge a + y_1 \Wedge b$ is in $\Ker(\a \Wedge \a \to \a) \cong H_2(\a, K).$ We are going to show that it is not trivial finishing the proof of Theorem A.

Set $\f = \f_{\k}(\{a, b\})$ and consider the map $\psi: \a \to \hat{\L}_{\k}$ which is the composition of the embedding $\a \to \hat{\a} \cong \hat{\f}$ and the map $\hat{\varphi}: \hat{\f} \to \hat{\L}_{\k}$ from \Cref{image}. Note that for the induced map $\hat{\psi}: \hat{\a} \to \hat{\L}_{\k}$
    $$\hat{\psi}(x_1) = x^2 + \sum_{i \geqslant 2} x^{2i}, \ \ \ \hat{\psi}(y_1) = 0.$$
If we identify $H_2(\hat{\L}_{\k}, {\k})$ with $\k\lb x\rb \wedge_{\k[x]} \k\lb x\rb$ as before, then the image of $x_1 \Wedge a + y_1 \Wedge b$ is $\left(\sum\limits_{i = 1}^{\infty} x^{2^i}\right) \wedge 1.$ By \Cref{lemma_not_rational} $\sum\limits_{i = 1}^{\infty} x^{2^i}$ is not rational, and hence, the image is not trivial by \Cref{rational}.

\subsection{Proof of Theorem B} We follow the notation of \Cref{image} for $R = \mathbb Z.$
Let $\f = \f_{\mathbb Z}(a,b).$  As in the proof of Theorem \ref{image}, we consider the following elements
    $$r_{2^n} = [a, {}_{2^n}b], \ \ \ s_{2^n} =  \left(\sum\limits^{2^{n-1}}_{i=1} (-1)^i[[a, {}_{2^n-i}b], [a, {}_{i-1}b]] \right) \in \f$$
    $$t_{2^n} = r_{2^n} \wedge a + s_{2^n} \wedge b \in \f \wedge \f$$
for $n=1,2,\dots$
Then the sum $\sum_n 2^nt_{2^n}$ defines a cycle in
    $$\hat \f\wedge \hat \f\wedge \hat \f\longrightarrow \hat \f\wedge \hat \f\buildrel{[\ ,\ ]}\over\longrightarrow \hat \f,$$
and represents an element in homology by \Cref{Ellis description}.  The power series $\sum_n 2^n x^{2^n}$ is not rational by \Cref{lemma_not_rational}. 
Then the image of the sum $\sum_n 2^nt_{2^n}$
    $$\widetilde{\varphi}\left(\sum\limits_{i = 1}^{\infty} 2^nt_{2^n}\right) = \left(\sum\limits_{i = 1}^{\infty} 2^nx^{2^n}\right) \wedge 1$$
is not trivial by \Cref{rational}, where $\widetilde{\varphi}$ was defined in the proof of \Cref{image}. Hence,
$\sum_n 2^nt_{2^n}$ defines a nonzero element in $H_2(\hat \f, \mathbb Z)$. 

Now observe that $\sum_n 2^nt_{2^n}$ is 2-divisible. Indeed, the finite sum
$
\sum_{n=1}^k 2^nt_{2^n}
$
defines an element from the kernel of the map
$ [\ ,\ ]: \f \Wedge \f\longrightarrow \f$
and since the second homology of the free Lie algebra are trivial, $\sum_{n=1}^k 2^nt_{2^n}\in \f \Wedge \f$ is trivial. Therefore we have $\sum 2^nt_{2^n}=2^k\sum_{n\geq k+1} 2^{n-k}t_{2^n}$ in $H_2(\hat{\mathfrak{f}},\mathbb Z)$.  Since the element $\sum_{n\geq k+1} 2^{n-k}t_{2^n}$ also defines a nonzero element in homology, we conclude that the element $\sum 2^nt_{2^n}$ is $2^k$-divisible for every $k$. That is, we constructed a 2-divisible element in $H_2(\hat \f, \mathbb Z)$.

Finally we prove that cohomological dimension of $\hat \f$ is at least $3.$
Assume the contrary, that ${\rm cd}(\hat \f)\leq 2.$
Consider a projective resolution $P_\bullet\twoheadrightarrow \mathbb Z$ over the enveloping algebra $U(\hat \f).$
Set $\Omega^n={\rm  Coker}(P_{n+1}\to P_n)$ for $n\geq 0.$
Then the long exact sequence of the short exact sequence $\Omega^{n+1}\rightarrowtail P_n \twoheadrightarrow \Omega^n$ imply that $${\rm Ext}^{m}(\Omega^{n+1}, \:\cdot\: )={\rm Ext}^{m+1}(\Omega^n ,\: \cdot\: ), \hspace{1cm} H_m(\hat \f,\Omega^{n+1})=H_{m+1}( \hat \f, \Omega^n) $$
for $m\geq 1$
and there is a monomorphism $$ H_1(\hat \f,\Omega^n) \rightarrowtail H_0(\hat \f, \Omega^{n+1}).$$
 Therefore ${\rm Ext}^{1}(\Omega^2 , \:\cdot\: )={\rm Ext}^{3}(\mathbb Z,\: \cdot\: )=H^3(\hat \f,\:\cdot\:)=0.$ It follows that $\Omega^2$ is projective.
On the other hand we have a monomorphism $H_2(\hat \f,\Z)=H_1(\hat \f,\Omega^1 ) \hookrightarrow H_0(\hat \f, \Omega^2).$ Since $\Omega^2$ is projective, $H_0(\hat \f, \Omega^2)$ is a free abelian group, and hence $H_2(\hat \f, \mathbb Z)$ is a free abelian group. This contradicts to the fact that  $H_2(\hat \f, \mathbb Z)$  has a nontrivial $2$-divisible element.

\subsection{Proof of Proposition 1.}

In order to prove this proposition we need to recall some statements from the theory of $H\Z$-localization that can be found in \cite{OrrShelahFarjoun} and \cite{B1}. During this section for a group $G$ we denote by $H_2(G) = H_2(G, \mathbb Z).$

Let $G$ be a group, $X$ be a set that we call the set of variables and $F=F(X)$ be the free group generated by $X$. An element $w$ of the free product $G*F$ is called monomial with coefficients in $G.$ A monomial $w$ is called {\it acyclic}, if its image in $F_{ab}$ is trivial via the composition of the maps $G*F \to F \to F_{ab},$
where the first map sends $G$ to $1$ and the second one is the canonical projection. Let $\mathcal{S}=(w_x)_{x\in X}$ be a family of acyclic monomials indexed by $X.$  A $\Gamma$-system of equations defined by $\mathcal{S}$ is the family of equations $(x=w_x)_{x\in X}.$ A solution of such a system is a map $X\to G$ such that $xw_x^{-1}$ is in the kernel of the induced map $G*F\to G.$

A homomorphism $f:G\to G'$ is called $2$-connected if it induces an isomorphisms $H_1(G)\cong H_1(G')$ and an epimorphism $H_2(G)\twoheadrightarrow H_2(G').$

A group $G$ is $H{\mathbb Z}$-local if for every 2-connected homomorphism $\varphi: A \to B$
and every homomorphism of groups $f : A \to G$ there is a group homomorphism $g : B \to G$ such that $g \circ \varphi = f.$

\vbox{
\begin{lemma} Let  $\mathcal S=(w_x)_{x\in X}$ be an $X$-indexed family of acyclic monomials in $G*F$. Consider the group $G_\mathcal{S}=(G*F)/R,$ where $R$ is the normal subgroup generated by the elements $xw_x^{-1}.$ Then the map
$$G\longrightarrow  G_\mathcal{S}$$
is $2$-connected.
\end{lemma}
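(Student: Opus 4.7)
The plan is to apply the five-term exact sequence to the short exact sequence
$$1 \longrightarrow R \longrightarrow G * F \longrightarrow G_{\mathcal S} \longrightarrow 1$$
and to read off both conclusions from an injectivity statement about the map $R/[G*F,R] \to H_1(G*F)$. First, I would recall that for a free product one has $H_n(G*F) \cong H_n(G) \oplus H_n(F)$ for $n \geq 1$ (e.g.\ via the wedge $K(G,1)\vee K(F,1) \simeq K(G*F,1)$). Since $F$ is free, this gives $H_2(G*F) \cong H_2(G)$ and $H_1(G*F) \cong G_{ab} \oplus F_{ab}$, with the copy of $G_{ab}$ being exactly the image of $H_1(G)$ under the inclusion $G \hookrightarrow G*F$.

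Next I would unpack $R/[G*F,R]$. Because $R$ is the normal closure of $\{xw_x^{-1}\}_{x\in X}$, the quotient $R/[G*F,R]$ is generated as an abelian group by the classes $e_x := \overline{xw_x^{-1}}$. Computing the connecting map $R/[G*F,R] \to H_1(G*F) = G_{ab}\oplus F_{ab}$, each $e_x$ is sent to $(-[w_x]_{G_{ab}},\, [x]_{F_{ab}})$, where I use that the $F_{ab}$-coordinate of $w_x$ vanishes by the acyclicity hypothesis on $w_x$. The elements $\{[x]_{F_{ab}}\}_{x\in X}$ form a basis of $F_{ab}$, so the $F_{ab}$-components of the $e_x$ are $\mathbb Z$-linearly independent; consequently the map $R/[G*F,R] \to H_1(G*F)$ is injective, and its image meets $G_{ab} \subset G_{ab}\oplus F_{ab}$ trivially.

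Now I would plug these observations into the five-term sequence
$$H_2(G*F) \longrightarrow H_2(G_{\mathcal S}) \longrightarrow R/[G*F,R] \longrightarrow H_1(G*F) \longrightarrow H_1(G_{\mathcal S}) \longrightarrow 0.$$
Injectivity of the third arrow forces $H_2(G*F) \twoheadrightarrow H_2(G_{\mathcal S})$, and composing with $H_2(G) \xrightarrow{\sim} H_2(G*F)$ yields the desired epimorphism $H_2(G) \twoheadrightarrow H_2(G_{\mathcal S})$. For $H_1$, the cokernel description gives $H_1(G_{\mathcal S}) = (G_{ab}\oplus F_{ab})/\langle (-[w_x],[x])\rangle$; since each basis vector $[x]$ gets identified with $[w_x]\in G_{ab}$ and no new relation is imposed on $G_{ab}$ itself (by the triviality of the intersection noted above), the map $G_{ab} \to H_1(G_{\mathcal S})$ is an isomorphism.

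The only genuinely nontrivial point is the linear independence step, and that step is exactly where the acyclicity hypothesis on the $w_x$ is used: without it, the $F_{ab}$-components could collapse and both injectivity of the connecting map and the identification $H_1(G)\cong H_1(G_{\mathcal S})$ would fail. Everything else is a mechanical application of the five-term sequence together with the free-product homology splitting.
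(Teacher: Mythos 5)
Your proposal is correct and follows essentially the same route as the paper: both arguments come down to showing that acyclicity of the $w_x$ forces the images of the generators $xw_x^{-1}$ of $R/[G*F,R]$ to be $\mathbb Z$-linearly independent in the $F_{ab}$-summand of $H_1(G*F)$, so that the connecting map in the five-term sequence is injective and $H_2(G)\cong H_2(G*F)$ surjects onto $H_2(G_{\mathcal S})$. The paper packages this via the identity $\mathrm{coker}\bigl(H_2(A)\to H_2(A/U)\bigr)\cong (U\cap[A,A])/[U,A]$ and declares the $H_1$ statement obvious, whereas you spell out the $H_1$ isomorphism from the cokernel description; the substance is identical.
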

}
\begin{proof}
The fact that the induced map $H_1(G)\to H_1(G_\mathcal{S})$ is an isomorphism, is obvious.

Prove that $H_2(G)\to H_2(G_\mathcal{S})$ is an epimorphism. It is easy to see that $H_2(G)=H_2(G*F).$ Indeed it follows from the long exact sequence in homology for free products with amalgamation (see \cite{Brown}). Hence we need to prove that $H_2(G*F)\to H_2(G_\mathcal{S})$ is an epimorphism. Recall that for any group $A$ and any its normal subgroup $U$ the cokernel of $H_2(A)\to H_2(A/U)$ is isomorphic to $(U\cap [A,A])/[U,A].$ Therefore we need to prove that $(R\cap  [G*F,G*F])/[R,G*F]=0.$ Let us write elements of $R/[R,G*F]=(R_{ab})_{G*F}$ and $F_{ab}$ in the additive notation. Then any element of $R/[R,G*F] $ can be presented as a linear combination $\theta= \sum \alpha_x (xw_x^{-1}).$ The image of $\theta$ in $F_{ab}$ is $\sum \alpha_xx.$ So $\theta$ in $(R\cap  [G*F,G*F])/[R,G*F]$ only if $\sum \alpha_xx=0,$ and hence $\alpha_x=0$ for any $x.$  Then $(R\cap  [G*F,G*F])/[R,G*F]=0.$
\end{proof}

\begin{lemma}\label{lemma_any_element_of_hatG} Let $G$ be a finitely generated group. Then any element of $\hat G$ is an element of a solution of a countable $\Gamma$-system of equations with coefficients in ${\rm Im}(G\to \hat G).$
\end{lemma}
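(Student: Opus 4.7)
The plan is to build the $\Gamma$-system recursively. Decompose $g\in\hat G$ as an element of $G$ times a finite product of commutators in $\hat G$, introduce a fresh pair of variables for each commutator letter, and iterate the same trick on every new variable indefinitely. Every variable will then represent a specific element of $\hat G$ and the root variable will represent $g$.

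The only piece of real algebra is the identification $G_{ab}\cong \hat G/[\hat G,\hat G]$ for finitely generated $G$, which is the standard $\hat G/\gamma_n(\hat G)\cong G/\gamma_n(G)$ specialized at $n=2$. This gives, for every $\tilde g\in\hat G$, a decomposition
$$\tilde g=h\cdot [a_1,b_1]\cdots[a_k,b_k],\qquad h\in G,\ a_j,b_j\in\hat G,$$
with the crucial feature that $k$ is finite since $[\hat G,\hat G]$ denotes the subgroup generated by commutators rather than its closure. Using this, I would construct a countable rooted tree $X$ of variables: start with one root $x_\varnothing$ to be sent to $g$, and for each variable $x\in X$ already present (representing some $\tilde g_x\in\hat G$), apply the decomposition to $\tilde g_x$, introduce fresh variables $y_{x,1},z_{x,1},\dots,y_{x,k_x},z_{x,k_x}$ to represent the letters of the commutators appearing in that decomposition, and set
$$w_x\;=\;h_x\cdot [y_{x,1},z_{x,1}]\cdots [y_{x,k_x},z_{x,k_x}]\in G*F(X).$$

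The remaining verifications are essentially bookkeeping. Each $w_x$ is acyclic because its $G$-part dies in the projection $G*F\to F$ and every commutator of free generators dies in $F_{ab}$; only finitely many fresh variables are produced per recursive step, so the total variable set $X$ is countable; and the assignment $s\colon X\to\hat G$, $x\mapsto\tilde g_x$ is by construction a solution, since $xw_x^{-1}$ is sent to $\tilde g_x\cdot\tilde g_x^{-1}=1$ in $\hat G$, and $g=s(x_\varnothing)$ appears as the value at the root. The main obstacle is exactly the opening algebraic input $\hat G_{ab}\cong G_{ab}$: without finite generation, the image of $G$ in $\hat G$ need not surject modulo commutators, so the recursion has no base case and the whole argument collapses. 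Finite generation is used for nothing else.
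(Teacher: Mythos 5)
Your proof is correct, but it takes a genuinely different route from the paper's. The paper deduces the lemma from the Farjoun--Orr--Shelah machinery: the $\Gamma$-closure of $G$ in an $H\mathbb{Z}$-local group coincides with Bousfield's $H\mathbb{Z}$-closure, countable systems suffice, and $\hat G = LG/\gamma_\omega(LG)$ for finitely generated $G$; the lemma is then an immediate corollary. You instead build the system by hand: using the surjectivity of $G_{\mathrm{ab}} \to \hat G/[\hat G,\hat G]$ you write each element as a coefficient from $\mathrm{Im}(G\to\hat G)$ times a \emph{finite} product of commutators, introduce fresh variables for the commutator entries, and iterate, producing a countable tree of acyclic equations $x = h_x\prod_j [y_{x,j},z_{x,j}]$ solved by the tautological assignment. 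The bookkeeping (acyclicity, countability of the tree, the root mapping to $g$) all checks out. What your route buys is self-containedness and explicitness: the only external input is that $G\to\hat G$ induces an isomorphism on $\gamma_1/\gamma_2$ (in fact you only need surjectivity of $G_{\mathrm{ab}}\to\hat G_{\mathrm{ab}}$). Do be aware that this input, while standard (it is the $n=2$ case of $G/\gamma_n(G)\cong \hat G/\gamma_n(\hat G)$ for finitely generated $G$, due to Baumslag/Bousfield and implicitly used in the paper's introduction when asserting $\hat F$ is parafree), is itself a nontrivial telescoping argument because $[\hat G,\hat G]$ here means the abstract commutator subgroup, not its closure; you correctly flag that finite generation enters exactly there, but you should cite it rather than treat it as formal. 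What the paper's route buys is coherence with the rest of the section: the subsequent proposition still needs the $H\mathbb{Z}$-locality of $\hat G$ (uniqueness of solutions) to conclude $A\subseteq H$, so the localization framework cannot be dispensed with globally, only for this one lemma.
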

\begin{proof}
It is proved in \cite{OrrShelahFarjoun} that a group is $H\Z$-local if and only if any $\Gamma$ system of equations has a unique solution. Moreover, they prove that it is enough to consider countable $\Gamma$-systems of equations: a group $L$ is $H\Z$-local if and only any countable $\Gamma$-system of equations has a unique solution.

If $G\subseteq H,$ then the set of all solutions of $\Gamma$-systems of equations with constants in $G$ is called $\Gamma$-closure of $G$ in $H.$ It is proved in \cite{OrrShelahFarjoun} that $\Gamma$-closure of $G$ equals to Bousfiled's $H\Z$-closure of $G$ in $H.$ Again, one can check that it is enough to consider countable $\Gamma$-systems of equations. In particular, this means that any element of $H\Z$-localization $LG$ is an element of a solution of a countable $\Gamma$-system of equations with coefficients in the image of $G$. Then the assertion follows from the fact $\hat G=LG/\gamma_\omega(LG).$
\end{proof}

\begin{proposition} Let $G$ be a finitely generated group. Then for any countable subset $A\subseteq \hat G$ there exists a subgroup $H\subseteq \hat G$ such that ${\rm Im}(G\to \hat G) \cup A \subseteq H$ and the induced maps $G/\gamma_n(G)\to H/\gamma_n(H)$ are isomorphisms.
\end{proposition}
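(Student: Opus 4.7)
The plan is to assemble $H$ as the image in $\hat G$ of a single large $\Gamma$-system quotient of a free product, and then to use $2$-connectedness together with Stallings' theorem to force the required isomorphisms on lower central quotients.

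First, for each $a \in A$ invoke \Cref{lemma_any_element_of_hatG} to obtain a countable $\Gamma$-system $(x = w_x)_{x \in X_a}$ with coefficients in ${\rm Im}(G \to \hat G)$, together with a solution $S_a : X_a \to \hat G$ whose image contains $a$. Set $X = \bigsqcup_{a \in A} X_a$; this is countable, so the amalgamated family $\mathcal{S} = (w_x)_{x \in X}$ is a single countable $\Gamma$-system in $G * F(X)$, and the combined map $S : X \to \hat G$ defined by $S|_{X_a} = S_a$ is a solution whose image contains all of $A$.

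Next, form $G_\mathcal{S} = (G * F(X))/R$ as in the preceding lemma. By that lemma $G \to G_\mathcal{S}$ is $2$-connected, and the group version of \Cref{Stallings} (i.e.\ the classical Stallings theorem \cite{Stallings}) then gives isomorphisms $G/\gamma_n(G) \xrightarrow{\cong} G_\mathcal{S}/\gamma_n(G_\mathcal{S})$ for every $n$. Because $S$ is a solution, the map $G * F(X) \to \hat G$ determined by the inclusion $G \to \hat G$ and by $S$ on $F(X)$ kills every relator $x w_x^{-1}$, hence factors through a homomorphism $G_\mathcal{S} \to \hat G$. Let $H$ be its image. Then $H$ is countable, ${\rm Im}(G \to \hat G) \subseteq H$ tautologically, and $A \subseteq {\rm Im}(S) \subseteq H$ by construction.

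For the final verification, use the standard identification $\hat G / \gamma_n(\hat G) \cong G/\gamma_n(G)$ (valid for finitely generated $G$) and pass to $n$-th lower central quotients in the factorisation $G \to G_\mathcal{S} \twoheadrightarrow H \hookrightarrow \hat G$ to obtain
\[
G/\gamma_n(G) \xrightarrow{\cong} G_\mathcal{S}/\gamma_n(G_\mathcal{S}) \twoheadrightarrow H/\gamma_n(H) \longrightarrow \hat G/\gamma_n(\hat G) \cong G/\gamma_n(G),
\]
whose total composite is the identity. Thus $G/\gamma_n(G) \to H/\gamma_n(H)$ is both split injective and surjective (as the composition of an isomorphism and a surjection), hence an isomorphism. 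The only real obstacle is packaging countably many countable $\Gamma$-systems into one and verifying that the iso on lower central quotients survives the passage from $G_\mathcal{S}$ to its image $H \subseteq \hat G$; both are handled cleanly by the diagram chase above, so no genuinely new ingredient beyond the two preceding lemmas and Stallings' theorem is required.
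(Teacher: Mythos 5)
Your proof is correct and follows essentially the same route as the paper: amalgamate the countably many countable $\Gamma$-systems into a single countable system, form $G_\mathcal{S}$, apply the $2$-connectedness lemma together with Stallings' theorem, and take $H$ to be the image of $G_\mathcal{S}$ in $\hat G$. The only (harmless) divergence is at the end — the paper obtains the map $G_\mathcal{S}\to\hat G$ from the isomorphism $\hat G_\mathcal{S}\cong\hat G$ (so $H/\gamma_n(H)\cong G_\mathcal{S}/\gamma_n(G_\mathcal{S})$ is immediate) and then invokes uniqueness of solutions in the $H\Z$-local group $\hat G$ to conclude $A\subseteq H$, whereas you build the map directly from the chosen solution $S$ (so $A\subseteq H$ is immediate) and instead recover $H/\gamma_n(H)\cong G/\gamma_n(G)$ by the splitting argument through $\hat G/\gamma_n(\hat G)$; both mechanisms are valid.
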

\begin{proof}  For simplicity we set $G/\gamma_n=G/\gamma_n(G)$ for any group $G.$
Lemma \ref{lemma_any_element_of_hatG} implies that any element $a\in A$ is an element of a solution of a countable $\Gamma$-system of equations. A countable union of countable $\Gamma$-systems of equations is a $\Gamma$-system of equations. Therefore there exists a countable family of acyclic monomials $\mathcal{S}=(w_i)_{i=1}^\infty$ from $G*F(x_1,x_2,\dots)$ such that $A$ lies in the image of the solution $\{x_1,x_2,\dots\}\to \hat G$ of the $\Gamma$-system of equations $(x_i=w_i).$ Consider the group
$$G_\mathcal{S}=(G*F(x_1,x_2,\dots))/R,$$
where $R$ is the normal subgroup generated by the elements $x_iw_{i}^{-1}.$
Since the map $G\to G_\mathcal{S}$ is $2$-connected,  then by Stallings' theorem we have an isomorphism $G/\gamma_n \cong  G_\mathcal{S}/\gamma_n.$ In particular $\hat G\cong \hat G_\mathcal{S}.$  Therefore we obtain a map $f: G_\mathcal{S}\to \hat G,$ whose kernel is $\gamma_\omega(G_\mathcal{S}).$ Denote by $H$ the image of $f.$ Then $H\cong G_\mathcal{S}/\gamma_\omega$ and $H/\gamma_n\cong G_\mathcal{S}/\gamma_n \cong G/\gamma_n.$ The restriction $f|_{\{x_1,x_2\dots\}}$ is a solution of the $\Gamma$-system of equations $(x_i=w_i)$. Since $\hat G$ is $H\Z$-local, any $\Gamma$-system of equations has a unique solution. Therefore $A\subseteq H.$
\end{proof}
\begin{corollary} Let $F$ be a finitely generated free group. Then for any countable subset $A\subset \hat F$ there exists a countable parafree subgroup $G\subseteq \hat F$ such that $A\subseteq G.$
\end{corollary}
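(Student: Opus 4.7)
The plan is to apply the preceding Proposition directly, with the role of $G$ played by $F$ and the countable subset $A \subseteq \hat F$ as given. The Proposition produces a subgroup $H \subseteq \hat F$ that contains both the image of $F$ and the set $A$, and such that the inclusion induces isomorphisms $F/\gamma_n(F) \cong H/\gamma_n(H)$ for every $n \geq 1$. I would then rename $H$ as the desired $G$ and verify the two remaining requirements: that this subgroup is countable, and that it is parafree.

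For parafreeness, the definition demands residual nilpotence together with a free group mapping in so as to induce isomorphisms of lower central quotients. The second requirement is handed to us by the Proposition: the inclusion $F \hookrightarrow H$ (well-defined because $F$ embeds into $\hat F$ and, by the Proposition, its image lies inside $H$) furnishes exactly such a map with $F$ itself as the free witness. For residual nilpotence, I would use that $\hat F = \varprojlim F/\gamma_n(F)$ embeds into $\prod_n F/\gamma_n(F)$, so $\gamma_\omega(\hat F) = 0$; since $\gamma_n(H) \subseteq \gamma_n(\hat F)$ for every $n$, this forces $\gamma_\omega(H) = 0$.

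For countability, I would trace through the construction in the proof of the Proposition: the auxiliary group was $G_\mathcal{S} = (F * F(x_1, x_2, \dots))/R$, built from the finitely generated $F$, countably many new generators $x_i$, and countably many normal generators $x_i w_i^{-1}$ of $R$. Hence $G_\mathcal{S}$ is countable, and because $H$ is a quotient of $G_\mathcal{S}$ (via the map $f$ with kernel $\gamma_\omega(G_\mathcal{S})$), $H$ is countable as well.

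There is essentially no serious obstacle here; this is a packaging corollary whose content is entirely supplied by the Proposition together with the standard fact that $\hat F$ is residually nilpotent. The only point requiring a moment's care is the passage from $\gamma_\omega(\hat F) = 0$ to $\gamma_\omega(H) = 0$, but this is immediate from monotonicity of the lower central series under inclusion of subgroups.
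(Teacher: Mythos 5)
Your proposal is correct and matches the paper's (implicit) argument: the paper states this corollary as an immediate consequence of the preceding Proposition applied with $G=F$, and your verifications of countability, residual nilpotence, and the lower-central-quotient isomorphisms are exactly the routine details being suppressed. No issues.
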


The Proposition 1 follows from this corollary.

\end{document}